\numberwithin{equation}{section}
\definecolor{link}{rgb}{0,0.2,0.8}
\renewcommand{\phi}{\varphi}
\renewcommand{\epsilon}{\varepsilon}
\newcommand{\nto}{\nrightarrow}
 \definecolor{lightgrey}{rgb}{0.666666,0.666666,0.666666}
\newcommand{\cc}{\ensuremath{\mathcal C}\xspace}
\newcommand{\cm}{\ensuremath{\mathcal M}\xspace}
\newcommand{\mM}{\ensuremath{\mathbb M}\xspace}
\newcommand{\cq}{\ensuremath{\mathcal Q}\xspace}
\newcommand{\x}{\times}
\newcommand{\op}{\ensuremath{{}^{\mathsf{op}}}\xspace}
\tikzstyle{braid}=[thick]
\tikzset{arr/.style={circle,draw,inner sep=0}}
\tikzset{empty/.style={inner sep=0pt, minimum size=0pt}}
  \newtheorem{proposition}[subsection]{Proposition}
  \newtheorem{lemma}[subsection]{Lemma}
  \newtheorem{theorem}[subsection]{Theorem}
  \theoremstyle{definition}
  \newtheorem{definition}[subsection]{Definition}
  \newtheorem{example}[subsection]{Example}
\theoremstyle{remark}
  \newtheorem{remark}[subsection]{Remark}
  \newcounter{c}
  \newcommand{\etyk}[1]{\vspace{-7.4mm}$$\begin{equation}\Label{#1}
  \addtocounter{c}{1}}
  \renewcommand{\]}{\ifnum \value{c}=1 $$\else \end{equation}\fi}
\begin{document}

\title{A category of multiplier bimonoids}

\author{Gabriella B\"ohm} 
\address{Wigner Research Centre for Physics, H-1525 Budapest 114,
P.O.B.\ 49, Hungary}
\email{bohm.gabriella@wigner.mta.hu}
\author{Stephen Lack}
\address{Department of Mathematics, Macquarie University NSW 2109, Australia}
\email{steve.lack@mq.edu.au}


\begin{abstract}
The central object studied in this paper is a {\em multiplier bimonoid in a
braided monoidal category $\cc$}, introduced and studied in
\cite{BohmLack:braided_mba}. Adapting the philosophy in
\cite{JanssenVercruysse:mba&mha}, and making some mild assumptions on the
category $\cc$, we consider a category $\cm$ whose objects are certain
semigroups in $\cc$ and whose morphisms $A\to B$ can be regarded as suitable
multiplicative morphisms from $A$ to the multiplier monoid of $B$. We 
equip this category $\cm$ with a monoidal structure and describe multiplier
bimonoids in $\cc$ (whose structure morphisms belong to a distinguished class
of regular epimorphisms) as certain comonoids in $\cm$. This provides us with
one possible notion of morphism between such multiplier bimonoids.
\end{abstract}
  
\maketitle


\section{Introduction}

A bialgebra over a field or, more generally, a bimonoid in a braided monoidal
category, is an object carrying a monoid and a comonoid structure subject to
compatibility conditions that can be interpreted as saying that a bimonoid is
a monoid in the category of comonoids; equivalently, it is a comonoid in the
category of monoids.

A multiplier bialgebra over a field
\cite{BohmGomezTorrecillasLopezCentella:wmba} or, more generally, a multiplier
bimonoid in a braided monoidal category \cite{BohmLack:braided_mba}, is a
generalization which is no longer a monoid or a comonoid in the base
category. However, Janssen and Vercruysse constructed in
\cite{JanssenVercruysse:mba&mha} a monoidal category, whose objects are
certain non-unital algebras (say over a field), and in which the comonoids
include the multiplier Hopf algebras of
Van~Daele~\cite{VanDaele:multiplier_Hopf}.  
 
Our aim in this paper is to generalize and strengthen this result. Namely,
under mild assumptions (involving a class \cq of regular epimorphisms) we
construct a category $\cm$ of certain semigroups in a braided monoidal
category $\cc$. We describe multiplier bimonoids in $\cc$ (whose structure
morphisms lie in \cq) as certain comonoids in $\cm$. Defining the morphisms
betweeen such multiplier bimonoids as the morphisms between the corresponding
comonoids in $\cm$, we obtain a category of multiplier bimonoids in $\cc$. 

{\bf Acknowledgement.} We gratefully acknowledge the financial support of the
Hungarian Scientific Research Fund OTKA (grant K108384) and the Australian
Research Council Discovery Grant (DP130101969), as well as an ARC Future Fellowship
(FT110100385). The second-named author is grateful for the warm hospitality of his hosts during visits to the Wigner Research Centre in Sept-Oct 2014 and Aug-Sept 2015.


\section{Multiplier monoids and their morphisms}

We begin by describing what is meant by multiplier monoids in closed
braided monoidal categories and we  characterize multiplicative morphisms with
codomain a multiplier monoid.  

\subsection{Assumptions on the base category}
\label{sect:assumptions}
Throughout, we work in a braided monoidal category $\cc$. The composition of
morphisms $f\colon A\to B$ and $g\colon B\to C$ will be denoted by $g.f\colon
A\to C$. 
The monoidal product of $A$ and $B$ will be denoted by $AB$, the monoidal unit
by $I$ and the braiding by $c$. For $n$ copies of the same object $A$, we also
use the power notation $AA\dots A=A^n$.  

We fix a class \cq of regular epimorphisms in \cc which is closed under
composition and monoidal product, contains the isomorphisms, and is
right-cancellative in the sense that if $s\colon A\to B$ and $t.s\colon A\to
C$ are in \cq, then so is $t\colon B\to C$. Since each $q\in\cq$ is 
a regular epimorphism, it is the coequalizer of some pair of maps. Finally we
suppose that this pair may be chosen in such a way that the coequalizer is
preserved by taking the monoidal product with any object. 

These assumptions are always satisfied when \cq consists of the split
epimorphisms. In the other main case \cq consists of the regular
epimorphisms. In this case, we need to suppose that the regular epimorphisms
are closed under composition, as is the case in any regular category; we also
need to suppose that (enough) coequalizers are preserved by taking the
monoidal product with a fixed object, as will be true if the monoidal category
is {\em closed} (see Paragraph~\ref{claim:closed} below). In particular, we
may take \cq to be all the regular epimorphisms if \cc is the symmetric
monoidal category of modules over a commutative ring.   

\subsection{Semigroups with non-degenerate multiplication.}
By a {\em semigroup} in the braided monoidal category $\cc$ we mean a pair
$(A,m)$ consisting of an object $A$ of $\cc$ and a morphism $m:A^2\to A$ --
called the {\em  multiplication} -- which obeys the associativity condition
$m.m1=m.1m$. If the semigroup has a {\em unit} -- that is, a morphism $u\colon
I\to A$ such that $m.u1=1=m.1u$ -- then we say that $A$ is a {\em monoid}. 

The multiplication -- or, alternatively, the semigroup $A$ -- is said to be {\em
non-degenerate} if for any objects $X,Y$ of $\cc$, both maps
\begin{eqnarray*}
&\cc(X,YA) \to \cc(XA,YA),\qquad
&f\mapsto \xymatrix{XA\ar[r]^-{f1}& YA^2\ar[r]^-{1m} &YA}\quad \textrm{and}\\
&\cc(X,AY) \to \cc(AX,AY),\qquad
&g\mapsto \xymatrix{AX\ar[r]^-{1g}& A^2Y\ar[r]^-{m1} &AY}
\end{eqnarray*}
are injective. The multiplication of a monoid is always
non-degenerate. (Requiring injectivity of these maps for {\em any} object $Y$ 
is quite strong and can often be avoided. For a careful analysis of the class
of objects $Y$ with this property consult \cite{BohmLackGomTor}.) 

\subsection{Closed monoidal categories.}
\label{claim:closed}
The braided monoidal category \cc is said to be {\em closed} if for each 
object $X$ the functor $X(-)\colon\cc\to\cc$ possesses a right adjoint
(equivalently, if each $(-)X$ possesses a right adjoint). We write $[X,-]$
for the right adjoint; then the components of the unit and the counit have the
form  
$$
\xymatrix{
Y \ar[r]^-{\eta} & [X,XY], & X [X,Y] \ar[r]^-{\epsilon} & Y. }
$$
The right adjoint $[X,-]$
is functorial in the variable $X$, so that in fact we have a functor
$[-,-]\colon \cc\op \x\cc\to\cc$ and now the adjointness 
$$\cc(XY,Z) \cong \cc(Y,[X,Z])$$
is natural in all three variables.

\begin{lemma}\label{lem:non-deg}
For a semigroup $A$ in a closed braided monoidal category $\cc$, the following
assertions are equivalent.

(i) The multiplication $m\colon A^2\to A$ is non-degenerate.

(ii) For any object $Y$, both morphisms 
\begin{eqnarray*}
r_Y&:=&\xymatrix{
AY \ar[r]^-\eta &
[A,A^2Y] \ar[r]^-{[A,m1]} &
[A,AY]}
\quad \textrm{and} \\
l_Y&:=&\xymatrix{
YA \ar[r]^-\eta &
[A,AYA] \ar[r]^-{[A,c1]} &
[A,YA^2] \ar[r]^-{[A,1c]} &
[A,YA^2] \ar[r]^-{[A,1m]} &
[A,YA]}
\end{eqnarray*}
are monomorphisms. 
\end{lemma}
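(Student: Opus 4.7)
The plan is to use the adjunction $A(-) \dashv [A,-]$ to identify each non-degeneracy map (as $X$ varies) with post-composition by $r_Y$ or $l_Y$. Since a morphism $\phi\colon U \to V$ is a monomorphism if and only if post-composition with $\phi$ is injective on $\cc(X, U)$ for every object $X$, this identification will immediately yield the stated equivalence.

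\textbf{The $r_Y$ case.} Fix $Y$. The adjunction $A(-) \dashv [A,-]$ supplies a natural isomorphism $\cc(AX, AY) \cong \cc(X, [A, AY])$ sending $h$ to $[A, h]\circ\eta_X$. Apply this to $h = (m1).(1g)$ for $g\colon X \to AY$: by naturality of $\eta$ in the form $[A, 1g]\circ\eta_X = \eta_{AY}\circ g$, the second non-degeneracy map $g \mapsto (m1).(1g)$ is identified with $g \mapsto [A, m1]\circ\eta_{AY}\circ g = r_Y\circ g$. Injectivity for every $X$ is thus equivalent to $r_Y$ being a monomorphism.

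\textbf{The $l_Y$ case.} Here $\cc(XA, YA)$ has $A$ on the ``wrong'' side to apply the adjunction directly. The remedy is to pre-compose with the braiding $c_{A,X}\colon AX \to XA$ to obtain an iso $\cc(XA, YA) \cong \cc(AX, YA)$, and then apply the adjunction to obtain $\cc(AX, YA) \cong \cc(X, [A, YA])$. Naturality of $c$ gives the key identity $(f1)\circ c_{A,X} = c_{A, YA}\circ(1f)$, so the first non-degeneracy map $f \mapsto (1m).(f1)$ is carried to $f \mapsto [A, (1m)\circ c_{A, YA}]\circ\eta_{YA}\circ f$. Finally, the hexagon axiom splits $c_{A, YA}$ as $(1_Y \ox c_{A, A})\circ(c_{A, Y}\ox 1_A)$, so the factor in front of $f$ is precisely the $l_Y$ of the statement. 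Injectivity for every $X$ is therefore equivalent to $l_Y$ being a monomorphism.

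\textbf{Main obstacle.} The $r_Y$ half is essentially automatic from the transpose formula together with naturality of $\eta$. The real content is in the $l_Y$ half: one must insert a braiding to convert $XA$ into $AX$, and then invoke the hexagon identity to decompose $c_{A, YA}$ into the two factors $c_{A, Y}\ox 1_A$ and $1_Y\ox c_{A, A}$. This explains why the definition of $l_Y$ contains both braiding terms rather than the single one that appears for $r_Y$.
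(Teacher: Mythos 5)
Your proof is correct and takes essentially the same route as the paper's: both identify $r_Y$ and $l_Y$ as the adjunction transposes (mates) of $m1\colon A^2Y\to AY$ and $1m.c_{A,YA}\colon AYA\to YA$ respectively, so that postcomposition with them corresponds to the two non-degeneracy maps. The paper simply asserts the mate relationship in one line, whereas you spell out the naturality of $\eta$ and of $c$ and the hexagon decomposition of $c_{A,YA}$, but the argument is the same.
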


\begin{proof}
Since $r_Y$ and $m1\colon A^2Y\to AY$ are mates under the adjunction $A(-)\dashv
[A,-]$, the equality $r_Y.f=r_Y.g$ holds for any morphisms $f$ and
$g\colon X\to AY$ if and only if $m1.1f=m1.1g$.
Symmetrically, since $l_Y$ and $1m.c_{A,YA}\colon AYA\to YA$ are mates, the equality
$l_Y.f=l_Y.g$ holds for any $f,g:X\to YA$ if and only if $1m.f1=1m.g1$.
\end{proof}

\subsection{$\mM$-morphisms}\label{sect:mM}
For a monoid $B=(B,m,u)$ and an object $A$, to give a morphism $f\colon A\to
B$ in \cc is equivalently to give a morphism $f_1\colon AB\to B$ compatible
with the right actions of $B$, in the sense that the first diagram in
\eqref{eq:multiplier} below commutes 
\begin{equation}\label{eq:multiplier}
\xymatrix{ 
AB^2 \ar[r]^{1m} \ar[d]_{f_11} & AB \ar[d]^{f_1} \\
B^2 \ar[r]_{m} & B}
\qquad
\xymatrix{
B^2A \ar[r]^{m1} \ar[d]_{1f_2} & BA \ar[d]^{f_2} \\
B^2 \ar[r]_{m} & B. }
\end{equation}
Under this bijection $f_1$ is given by $m.f1$, and $f$ given by
$f_1.1u$. Dually, it is equivalent to giving a morphism $f_2\colon BA \to B$
compatible with the left actions, in the sense that the second diagram in
\eqref{eq:multiplier} commutes. Furthermore, the resulting $f_1$ and $f_2$ are
related by commutativity of the diagram   
\begin{equation}\label{eq:component-compatibility}
\xymatrix{
BAB\ar[r]^-{1f_1} \ar[d]_-{f_2 1} &
B^2\ar[d]^-m\\
B^2\ar[r]_-m &
B .}
\end{equation}

If now $A$ is a semigroup, then $f\colon A\to B$ will be a semigroup morphism
if and only if the diagrams 
\begin{equation}\label{eq:multiplicative}
\xymatrix{
A^2B \ar[r]^-{1f_1} \ar[d]_-{m1} &
AB \ar[d]^-{f_1}
&&
BA^2 \ar[r]^-{f_21} \ar[d]_-{1m} &
BA \ar[d]^-{f_2} \\
AB \ar[r]_-{f_1} &
B 
&&
BA \ar[r]_-{f_2} &
B}
\end{equation}
commute. 

This motivates the following notion of morphism when $B$ is just a
non-degenerate semigroup. 

\begin{definition}
If $A$ is an object and $B$ is a non-degenerate semigroup in \cc, an {\em
  $\mM$-morphism} $f$ from $A$ to $B$ is a pair $(f_1,f_2)$ of morphisms in
\cc making the diagram \eqref{eq:component-compatibility} commute. We call
$f_1$ and $f_2$ the {\em components} of $f$, and we represent the
$\mM$-morphism as $f\colon A\nrightarrow B$. If $A$ is also a semigroup, we
say that the $\mM$-morphism $f$ is {\em multiplicative} if the diagrams
\eqref{eq:multiplicative} commute.  
\end{definition}

\begin{remark}~\label{rem:components_equivalent}
By non-degeneracy, for $\mM$-morphisms $f$ and $g$ to be equal, it suffices
that either $f_1=g_1$ or $f_2=g_2$; the other equality then
follows. Similarly, for an $\mM$-morphism to be multiplicative it suffices
that either of the diagrams in \eqref{eq:multiplicative} commutes;
commutativity of the other then follows.  
\end{remark}

\begin{lemma}\label{lem:module-maps}
If $B$ is a non-degenerate semigroup, and $f\colon A\nrightarrow B$ is an
$\mM$-morphism, then the diagrams in \eqref{eq:multiplier} commute.  
\end{lemma}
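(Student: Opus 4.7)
The diagrams to be checked reduce to the two equations $m \cdot f_1 1 = f_1 \cdot 1 m$ on $AB^2$ and $m \cdot 1 f_2 = f_2 \cdot m 1$ on $B^2 A$. My plan is to prove the first of these and to obtain the second by a mirror-image argument. Apart from associativity and functoriality of the tensor product, the only input I expect to need is the compatibility $m \cdot 1 f_1 = m \cdot f_2 1$ of \eqref{eq:component-compatibility} together with the non-degeneracy of $B$.

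The key move is to apply the left form of non-degeneracy for $B$: the map $\cc(AB^2, B) \to \cc(BAB^2, B)$ given by $h \mapsto m \cdot 1 h$ is injective (this is the definition of non-degeneracy with $X = AB^2$ and $Y = I$). It therefore suffices to prove
\begin{equation*}
m \cdot 1(m \cdot f_1 1) \;=\; m \cdot 1(f_1 \cdot 1 m) \colon BAB^2 \to B,
\end{equation*}
and the plan is to reduce both sides independently to the common expression $m \cdot (f_2 \otimes m)\colon (BA)(B^2) \to B$. For the left-hand side, functoriality gives $m \cdot 1m \cdot 1 f_1 1$; associativity rewrites this as $m \cdot (m \cdot 1 f_1) 1$; the compatibility replaces $m \cdot 1 f_1$ with $m \cdot f_2 1$; and one more use of associativity and functoriality brings the result to $m \cdot 1m \cdot f_2 11$, which is precisely $m \cdot (f_2 \otimes m)$. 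For the right-hand side, functoriality expands $1(f_1 \cdot 1m)$ as $1 f_1 \cdot 11m$; the compatibility gives $m \cdot f_2 1 \cdot 11m$; and one further interchange collapses $f_2 1 \cdot 11m$ to $f_2 \otimes m$ as well.

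The second equation is handled by the mirror strategy, this time using the right form of non-degeneracy $h \mapsto m \cdot h 1$ and reducing both sides to $m \cdot (m \otimes f_1)\colon (B^2)(AB) \to B$. I do not anticipate any substantial obstacle beyond careful bookkeeping of tensor positions; the only non-trivial input in either argument is a single invocation of the defining compatibility \eqref{eq:component-compatibility} of an $\mM$-morphism.
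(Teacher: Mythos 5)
Your argument is correct and is essentially the paper's own proof: the paper likewise precomposes with $B\otimes(-)$, uses \eqref{eq:component-compatibility} once on each side together with associativity and the interchange law to reduce both composites to the common morphism $m.1m.f_211 = m.(f_2\otimes m)$, and then cancels by non-degeneracy, handling the second diagram symmetrically. No gaps.
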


\begin{proof}
Commutativity of the first diagram in the claim follows by commutativity of
both diagrams 
$$
\xymatrix{
BAB^2\ar[r]^-{1f_11}\ar[d]_-{f_211} 
\ar@{}[rd]|-{\eqref{eq:component-compatibility}}&
B^3 \ar[rr]^-{1m} \ar[d]_-{m1} 
\ar@{}[rrd]|-{\textrm{(associativity)}}&&
B^2\ar[d]^-m
&
BAB^2 \ar[r]^-{11m}\ar[d]_-{f_211} &
BAB \ar[r]^-{1f_1}\ar[d]_-{f_21} 
\ar@{}[rd]|-{\eqref{eq:component-compatibility}}&
B^2\ar[d]^-m\\
B^3\ar[r]_-{m1} &
B^2 \ar[rr]_-m &&
B
&
B^3\ar[r]_-{1m}&
B^2\ar[r]_-m &
B}
$$
and the associativity and non-degeneracy of $m$.
A symmetric reasoning applies to the second diagram.
\end{proof}

\subsection{Multiplier monoids}\label{sect:MA}
Suppose that $X$ is an object of \cc and $A$ is a non-degenerate semigroup. If
\cc is closed, then there is a bijection between morphisms $f_1\colon XA\to A$
and morphisms $\check{f}_1\colon X\to [A,A]$, and similarly there is a
bijection between morphisms $f_2\colon AX\to A$ and morphisms $\hat{f}_2\colon
X\to [A,A]$. Moreover, the morphisms $f_1$ and $f_2$ make the diagram
\eqref{eq:component-compatibility} commute, and so determine an
$\mM$-morphism, just when $\check{f}_1$ and $\hat{f}_2$ make the first diagram
in 
\begin{equation}\label{eq:M(A)}
  \xymatrix{ X \ar[r]^{\check{f}_1} \ar[d]_{\hat{f}_2} & [A,A] \ar[d]^\phi \\
[A,A] \ar[r]_\psi & [A^2,A] }
\qquad
\xymatrix{
\mM(A)\ar[r]^{\check{e}_1} \ar[d]_{\hat{e}_2}  &
[A,A]\ar[d]^\phi \\
[A,A]\ar[r]_\psi &
[A^2,A].}
\end{equation}
commute, where $\phi$ and $\psi$ correspond under 
the adjunction isomorphism $\cc(A^2[A,A],A)$ $\cong \cc([A,A],[A^2,A])$ to the
morphisms  
$$
\xymatrix{A^2[A,A]\ar[r]^-{1\epsilon} & A^2\ar[r]^-m & A,}
\qquad
\xymatrix{A^2[A,A]\ar[r]^-{1c}&A[A,A]A \ar[r]^-{\epsilon 1} & A^2 \ar[r]^-m &A.}
$$
If the pullback of $\phi$ and $\psi$ exists, as in the second 
diagram of \eqref{eq:M(A)}, then there is a bijection between $\mM$-morphisms
$X\nrightarrow A$, and morphisms $X\to \mM(A)$ in \cc.  

Under this bijection, the identity morphism $\mM(A)\to \mM(A)$ will correspond
to an $\mM$-morphism $e\colon \mM(A)\nto A$ with components $e_1\colon
\mM(A)A\to A$ and $e_2\colon A\mM(A)\to A$. 
The components of a morphism $f\colon X\to \mM(A)$ have the form $f_1=e_1.f1$
and $f_2=e_2.1f$. 

\begin{proposition}\label{prop:M(A)-monoid}
Consider a non-degenerate semigroup  $A$ in a closed braided monoidal category
$\cc$.  
\begin{enumerate}[(i)]
\item If the pullback $\mM(A)$ in \eqref{eq:M(A)} exists, then it carries
  the structure of a unital monoid in \cc.
\item For another semigroup $B$, a morphism $f:B\to \mM(A)$ in \cc is
  multiplicative if and only if the the corresponding $\mM$-morphism $B\nto A$
  is so. 
\end{enumerate}
\end{proposition}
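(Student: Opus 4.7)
The plan is to exploit the universal property of the pullback defining $\mM(A)$ in \eqref{eq:M(A)}: morphisms $X\to\mM(A)$ correspond bijectively to $\mM$-morphisms $X\nto A$, with components recoverable as $f_1=e_1.(f1)$ and $f_2=e_2.(1f)$. I will therefore specify both the multiplication and the unit of $\mM(A)$ by giving the corresponding $\mM$-morphisms into $A$. For the multiplication $\mu\colon\mM(A)^2\to\mM(A)$, I take the pair $\mu_1:=e_1.(1e_1)\colon\mM(A)^2A\to A$ and $\mu_2:=e_2.(e_21)\colon A\mM(A)^2\to A$. The compatibility condition \eqref{eq:component-compatibility} for this pair asserts that $m.(1\mu_1)=m.(\mu_2 1)$ as morphisms $A\mM(A)^2A\to A$; expanding the definitions and applying \eqref{eq:component-compatibility} for $e$ itself once to each composite, both sides reduce via the interchange law to the common expression $m.(e_2\otimes e_1)$. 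For the unit $u\colon I\to\mM(A)$ I take the $\mM$-morphism whose components are the unitors $\ell\colon IA\to A$ and $\rho\colon AI\to A$; the compatibility \eqref{eq:component-compatibility} is then precisely the triangle axiom of $\cc$.

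The monoid axioms for $(\mM(A),\mu,u)$ then follow by invoking Remark~\ref{rem:components_equivalent}: equality of $\mM$-morphisms may be tested on a single component, so it suffices to verify associativity and the unit laws after passing to first components. A short computation using $\mu_1=e_1.(1e_1)$ and the interchange law shows that both $(\mu.(\mu1))_1$ and $(\mu.(1\mu))_1$ collapse to the common expression $e_1.(1e_1).(11e_1)\colon\mM(A)^3A\to A$, giving associativity. The left and right unit laws are verified analogously, reducing to instances of the triangle identity after $u_1$ and $u_2$ are written out as unitors; no separate check on second components is required, by non-degeneracy.

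For part~(ii), the same strategy applies directly. If $f\colon B\to\mM(A)$ corresponds to the $\mM$-morphism with components $f_1=e_1.(f1)$ and $f_2=e_2.(1f)$, then multiplicativity of $f$ into the monoid $\mM(A)$ reads $f.m=\mu.(f\otimes f)\colon B^2\to\mM(A)$. Computing first components on both sides yields $f_1.(m1)$ on the left, and, after substituting $\mu_1=e_1.(1e_1)$ and applying interchange, $f_1.(1f_1)$ on the right. By Remark~\ref{rem:components_equivalent}, the equality of these first components is equivalent to $f$ being multiplicative, and is simultaneously the commutativity of the first square of \eqref{eq:multiplicative}, which by the same remark is equivalent to the $\mM$-morphism $(f_1,f_2)$ being multiplicative. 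The principal bookkeeping step in the whole proof is the compatibility verification for $(\mu_1,\mu_2)$ in part~(i) — which is where the compatibility of $e$, associativity of $m$, and the interchange law must cooperate — since every remaining verification reduces by non-degeneracy to a single equation on first components.
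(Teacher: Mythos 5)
Your proposal is correct and takes essentially the same route as the paper: you define the multiplication and unit of $\mM(A)$ by exactly the same components $e_1.1e_1$, $e_2.e_21$ and (the unitor form of) the identity of $A$, verify the compatibility \eqref{eq:component-compatibility} via one application of the compatibility for $e$ plus interchange, and reduce all remaining axioms and part~(ii) to a comparison of first components using non-degeneracy, just as the paper does. The only cosmetic difference is that you make the unit constraints of \cc explicit where the paper suppresses them.
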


\begin{proof}
(i) Using \eqref{eq:component-compatibility} for $e$ and functoriality of the
  monoidal product, we see 
that $e_1.1e_1:\mM(A)^2A\to A$ and $e_2.e_2 1:A\mM(A)^2 \to A$ can be regarded
as the components of a morphism $m:\mM(A)^2\to \mM(A)$ rendering commutative
\begin{equation}\label{eq:M(A)-product}
\xymatrix{
\mM(A)^2 A\ar[r]^-{1e_1} \ar[d]_-{m1}&
\mM(A) A \ar[r]^-{e_1} &
A \ar@{=}[d]&
A\mM(A) \ar[l]_-{e_2} &
A\mM(A)^2 \ar[l]_-{e_2 1} \ar[d]^-{1m}\\
\mM(A)A\ar[rr]_-{e_1}&&
A &&
A\mM(A) .\ar[ll]^-{e_2}}
\end{equation}
Applying \eqref{eq:M(A)-product} and functoriality of the monoidal product,
the components of $m.1m$ and of $m.m1$ turn out to be equal to the same
morphisms $e_1.1e_1.11e_1$ and $e_2.e_2 1.e_2 11$. This proves the
associativity of $m$.

The identity morphism $1:A\to A$ can be regarded as the first and the second
components of a morphism $u:I\to \mM(A)$ rendering commutative
\begin{equation}\label{eq:M(A)-unit}
\xymatrix{
&
A \ar@{=}[d]\ar[ld]_-{u1}\ar[rd]^-{1u} \\
\mM(A)A\ar[r]_-{e_1}&
A&
A\mM(A).\ar[l]^-{e_2}}
\end{equation}
By \eqref{eq:M(A)-product}, \eqref{eq:M(A)-unit} and functoriality of the
monoidal product, the components of both $m.1u$ and of $m.u1$ are equal to
$e_1$ and $e_2$. This proves that $u$ is the unit of $m$.

(ii) By \eqref{eq:M(A)-product}, the components of
$m.ff:B^2\to \mM(A)$ are $f_1.1f_1$ and $f_2.f_21$; while the components of
$f.m:B^2\to \mM(A)$ are $f_1.m1$ and $f_2.1m$. 
\end{proof}


\section{A category of semigroups}

In the previous section we introduced a notion of $\mM$-morphism for
non-degenerate semigroups; we now turn to composition of $\mM$-morphisms. This
does not seem to be possible in general, but we give sufficient conditions
under which it is.  Once again, we motivate the definition using the unital
case. If $g\colon B\to C$ is a monoid morphism, and $f\colon A\to B$ an
arbitrary morphism,  then the following diagram commutes 
$$\xymatrix{
ABC \ar[r]^{1g1} \ar[d]_{f11} & 
AC^2 \ar[r]^{1m} & 
AC \ar[d]^{f1} \\
B^2C \ar[r]^{1g1} \ar[dd]_{m1} & 
BC^2 \ar[r]^{1m} \ar[d]^{g11} & 
BC \ar[d]^{g1} \\
& C^3 \ar[r]^{1m} \ar[d]^{m1} & 
C^2 \ar[d]^{m} \\
BC \ar[r]_{g1} & 
C^2 \ar[r]_{m} & 
C }$$
which can in turn be read as the equality $(gf)_1.1g_1=g_1.f_11$ using the
notation of Paragraph~\ref{sect:mM}. 

Now suppose that $f\colon A\nto B$ and $g\colon B\nto C$ are $\mM$-morphisms
with $g$ multiplicative. We would like to define a composite $\mM$-morphism
$g\bullet f$ of $g$ and $f$ in such a way that the diagrams 
\begin{equation}
  \label{eq:c}
\xymatrix{
ABC \ar[r]^{1g_1} \ar[d]_{f_11} & AC \ar[d]^{(g\bullet f)_1} \\
BC \ar[r]_{g_1} & C }
\qquad
\xymatrix{
CA \ar[d]_{(g\bullet f)_2} & CBA \ar[l]_{g_21} \ar[d]^{1f_2} \\
C & CB \ar[l]^{g_2} }
\end{equation}
commute. 

For a general $\mM$-morphism $g$, there might be many $g\bullet f$ making
these diagrams commute, but if the maps $1g_1$ and $g_21$ are epimorphisms,
there can be at most one. As far as the existence of $g\bullet f$, this will
clearly become easier to analyzie if $1g_1$ and $g_21$ are regular
epimorphisms. In fact it will turn out that there is a $g\bullet f$ provided
that $g_1$ and $g_2$ lie in \cq, in which case we say that the $\mM$-morphism
$g$ is {\em dense}. The key step is the following result.

\begin{lemma}\label{lem:composition}
Let $f\colon A\nto B$ and $g\colon B\nto C$ be $\mM$-morphisms with $g$ dense
and multiplicative; in particular, this includes non-degeneracy of $C$. Then
for any morphism $s:X\to BC$, the composite
$$
\xymatrix{
AX\ar[r]^-{1s} &
ABC \ar[r]^-{f_11} &
BC \ar[r]^-{g_1} &
C}
$$
depends on $s$ only through $g_1.s$. Dually, for any morphism $s:X\to CB$, the
composite 
$$
\xymatrix{
XA\ar[r]^-{s1} &
CBA \ar[r]^-{1f_2} &
CB \ar[r]^-{g_2} &
C}
$$
depends on $s$ only through $g_2.s$.
\end{lemma}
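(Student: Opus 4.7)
Writing $\Phi := g_1.(f_1 1_C)\colon ABC\to C$, my plan is to show that $\Phi$ factors through $1_A g_1\colon ABC\to AC$ as some $h\colon AC\to C$. Once this is done, for any $s\colon X\to BC$ we have
\[
g_1.(f_1 1_C).(1_A s) \;=\; h.(1_A g_1).(1_A s) \;=\; h.\bigl(1_A (g_1.s)\bigr),
\]
depending on $s$ only through $g_1.s$, as required. The dual claim, for $s\colon X\to CB$, follows by a symmetric argument using the multiplicativity of $g$ in its second component and the bimodule property of $f_2$ from Lemma~\ref{lem:module-maps}.

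The first step is to derive the identity
\[
\Phi.(1_A m_B 1_C) \;=\; \Phi.(1_{AB}\, g_1) \qquad\colon ABBC\to C. \qquad (\ast)
\]
To see this, expand $\Phi.(1_A m_B 1_C) = g_1.(f_1 1_C).(1_A m_B 1_C)$; the bimodule identity $f_1.(1_A m_B)=m_B.(f_1 1_B)$ from Lemma~\ref{lem:module-maps} rewrites $(f_1 1_C).(1_A m_B 1_C)$ as $(m_B 1_C).(f_1 1_B 1_C)$; the multiplicativity relation $g_1.(m_B 1_C)=g_1.(1_B g_1)$ for $g$ then transforms the composite to $g_1.(1_B g_1).(f_1 1_B 1_C)$, which by the middle-four interchange equals $g_1.(f_1 1_C).(1_{AB}\, g_1) = \Phi.(1_{AB}\, g_1)$.

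Next, since $g_1\in\cq$, the hypotheses on $\cq$ provide a pair $p,q\colon P\to BC$ such that $g_1$ is the coequalizer of $(p,q)$ and this coequalizer is preserved by monoidal product with $A$. Thus $1_A g_1\colon ABC\to AC$ is the coequalizer of $(1_A p, 1_A q)$, and by the universal property the factorization $h$ exists precisely when $\Phi.(1_A p)=\Phi.(1_A q)$.

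The hard part will be to verify this equality from only the datum $g_1.p=g_1.q$, since we have no explicit handle on $(p,q)$. My expectation is that the verification combines $(\ast)$---which exhibits $\Phi$ as coequalizing the pair $(1_A m_B 1_C,\,1_{AB}\,g_1)$ that is also coequalized by $1_A g_1$---with the non-degeneracy of $C$: that non-degeneracy reduces $\Phi.(1_A p)=\Phi.(1_A q)$ to an equality of the form $\Phi.(1_A \tilde p)=\Phi.(1_A \tilde q)$ for morphisms $\tilde p,\tilde q$ derived from $p,q$ by right $C$-multiplication (making essential use of the bimodule identity $g_1.(1_B m_C)=m_C.(g_1 1_C)$ for $g_1$, i.e.\ the right-$C$-linearity of $\Phi$); iterating and appealing to $(\ast)$ together with the right-cancellation property of $\cq$ should transport the input equality $g_1.p=g_1.q$ into the desired coequalization, yielding $h$ and hence the lemma.
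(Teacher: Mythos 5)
Your setup is sound as far as it goes: factoring $\Phi=g_1.f_11$ through the coequalizer $1g_1$ would indeed prove the lemma, the identity $(\ast)$ is correct (it follows from Lemma~\ref{lem:module-maps} for $f$ together with the multiplicativity of $g$), and the reduction to showing $\Phi.(1_Ap)=\Phi.(1_Aq)$ for a chosen pair with $g_1.p=g_1.q$ is legitimate. But that last equality \emph{is} the statement of the lemma, applied to the morphisms $p$ and $q$, so at this point nothing has been proved; and your final paragraph, where the gap is supposed to be closed, is a guess rather than an argument, and the ingredients you name do not suffice. Concretely: right-$C$-linearity of $\Phi$ (from \eqref{eq:multiplier} for $g$) together with non-degeneracy of $C$ reduces $\Phi.(1_Ap)=\Phi.(1_Aq)$ to $\Phi.(1_A\tilde p)=\Phi.(1_A\tilde q)$ with $\tilde p=1m.p1$ and $\tilde q=1m.q1$; but this new pair again satisfies only $g_1.\tilde p=g_1.\tilde q$ and carries no extra structure, so the proposed ``iteration'' is circular and never terminates. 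Likewise, combining $(\ast)$ with $g_1.p=g_1.q$ only yields $\Phi.(1_A(m1.1p))=\Phi.(1_A(m1.1q))$, i.e.\ the desired equality after an extra copy of $B$ has been multiplied in on the left, and neither right-cancellation in \cq nor the right-module identities give any way to strip that $B$ off.

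What is genuinely missing is any use of the second components $f_2$ and $g_2$ and of the compatibility \eqref{eq:component-compatibility} between the two components of an $\mM$-morphism; this is the heart of the matter. The difficulty is that $f_1\colon AB\to B$ acts on precisely the $B$-leg of $BC$ where $s$ lands, so $\Phi.(1_As)$ cannot be decoupled from $s$ by manipulations on the right. The paper decouples it by tensoring with $B$ on the \emph{left}: using $g_1.1g_1=g_1.m1$ and then $m.1f_1=m.f_21$ one finds $g_1.(1_B\otimes\Phi.(1_As))=g_1.(f_2\otimes (g_1.s))$, which visibly depends on $s$ only through $g_1.s$; one then tensors with $C$ on the left, uses \eqref{eq:component-compatibility} for $g$ to replace $m.1g_1$ by $m.g_21$, cancels the \cq-epimorphism $g_211$, and finally invokes non-degeneracy of $C$ to remove the remaining $m.(1_C\otimes -)$. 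Your outline, which never brings in $f_2$ or $g_2$, cannot be completed as written.
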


\begin{proof}
The equal paths around
$$
\xymatrix@C=35pt@R=15pt{
BAX\ar[r]^-{11s} \ar[dd]_-{f_21} &
BABC \ar[rr]^-{1f_11} \ar[dd]^-{f_211} 
\ar@{}[rrd]|-{\eqref{eq:component-compatibility}} &&
B^2C\ar[rr]^-{1g_1} \ar[d]_-{m1} 
\ar@{}[rrd]|-{\eqref{eq:multiplicative}} &&
BC \ar[dd]^-{g_1}\\
&&&
BC \ar[rrd]^-{g_1}  
\ar@{}[d]|(.6){\eqref{eq:multiplicative}} &&\\
BX \ar[r]_-{1s} &
B^2C\ar[rru]^-{m1} \ar[rr]_-{1g_1} &&
BC \ar[rr]_-{g_1} &&
C}
$$
clearly depend only on $g_1.s$. Thus the common composite
$$
\xymatrix{
CBAX \ar[r]^-{111s} \ar[d]_-{g_211} &
CBABC \ar[r]^-{11f_11} &
CB^2C \ar[r]^-{11g_1} &
CBC \ar[r]^-{1g_1} \ar[d]_-{g_21} 
\ar@{}[rd]|-{\eqref{eq:component-compatibility}} &
C^2 \ar[d]^-m \\
CAX \ar[r]_-{11s} &
CABC \ar[r]_-{1f_11} &
CBC \ar[r]_-{1g_1} &
C^2 \ar[r]_-m &
C}
$$
depends only on $g_1.s$. Since $g_2$ belongs to \cq so does 
$g_211$, and thus the bottom row of this last diagram depends only on
$g_1.s$. Finally by non-degeneracy of the multiplication of $C$ we conclude
the first claim. The other claim follows symmetrically. 
\end{proof}

\begin{proposition}\label{prop:composition}
If $f\colon A\nto B$ is an $\mM$-morphism and $g\colon B\nto C$ is a dense
multiplicative $\mM$-morphism then there is a unique $\mM$-morphism $g\bullet
f\colon A\nto C$ making the diagrams \eqref{eq:c} commute.  
Furthermore, $g\bullet f$ is dense or multiplicative if $f$ is so.
\end{proposition}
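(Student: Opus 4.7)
The plan is to construct $(g\bullet f)_1$ and $(g\bullet f)_2$ separately using Lemma~\ref{lem:composition}, then verify the $\mM$-morphism compatibility \eqref{eq:component-compatibility} for the resulting pair, and finally handle the density and multiplicativity claims. By Remark~\ref{rem:components_equivalent} the uniqueness of each component yields uniqueness of $g\bullet f$ as an $\mM$-morphism.

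For each of $g_1$ and $g_2$ I would first choose a coequalising pair as described in Section~\ref{sect:assumptions}, so that $1g_1\colon ABC\to AC$ and $g_21\colon CBA\to CA$ are coequalisers preserved by taking monoidal product with any object. Lemma~\ref{lem:composition} then asserts precisely that $g_1.f_11$ and $g_2.1f_2$ coequalise the respective pairs, so each factors uniquely through $1g_1$ and $g_21$, yielding components $(g\bullet f)_1\colon AC\to C$ and $(g\bullet f)_2\colon CA\to C$ that satisfy \eqref{eq:c}. The factorisations are unique since $1g_1$ and $g_21$ lie in $\cq$ and hence are epimorphisms.

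The main technical step is verifying \eqref{eq:component-compatibility} for $g\bullet f$, namely $m_C.1(g\bullet f)_1 = m_C.(g\bullet f)_2 1$ as maps $CAC\to C$. My approach is to pre-compose both sides with the regular epimorphism $g_2\ox 1_A\ox g_1\colon CBABC\to CAC$, which lies in $\cq$ by closure under monoidal product. Using the defining properties of the components, each side reduces to $m_C.(g_2\ox g_1)$ pre-composed with either $1_{CB}\ox f_1\ox 1_C$ (on the left) or $1_C\ox f_2\ox 1_{BC}$ (on the right). The key observation is that $m_C.(g_2\ox g_1)\colon CBBC\to C$ factors as $m_C.(g_2\ox 1_C).(1_C\ox m_B\ox 1_C)$: one first writes $g_2\ox g_1=(1_C\ox g_1).(g_2\ox 1_{BC})$ and invokes \eqref{eq:component-compatibility} for $g$ on the intermediate $CBC$ to replace $1_C\ox g_1$ by $g_2\ox 1_C$; the identity $g_2.g_21=g_2.1m_B$ from the second diagram of \eqref{eq:multiplicative} then combines the two copies of $g_2$ and factors $1_C\ox m_B\ox 1_C$ out on the right. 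With this factorisation in hand, both pre-compositions further reduce to $m_C.(g_2\ox 1_C)$ applied to $1_C\ox(m_B.1f_1)\ox 1_C$ and $1_C\ox(m_B.f_21)\ox 1_C$ respectively, which coincide by \eqref{eq:component-compatibility} for $f$.

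For the remaining claims, if $f$ is dense then $f_1\in\cq$, so $g_1.f_11=(g\bullet f)_1.1g_1$ lies in $\cq$ by closure under monoidal product and composition; the right-cancellation property of $\cq$ then forces $(g\bullet f)_1\in\cq$, and $(g\bullet f)_2\in\cq$ by a symmetric argument. If $f$ is multiplicative, by Remark~\ref{rem:components_equivalent} it suffices to check one of the diagrams of \eqref{eq:multiplicative}; pre-composing $(g\bullet f)_1.m_A 1=(g\bullet f)_1.1(g\bullet f)_1$ with the regular epimorphism $1_{A^2}\ox g_1\colon A^2BC\to A^2C$ and applying the defining property of $(g\bullet f)_1$ twice reduces both sides to $g_1.f_11.1f_11$, which agree by the first diagram of \eqref{eq:multiplicative} for $f$. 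I expect the verification of the compatibility diagram for $g\bullet f$ to be the main obstacle, since it is the step that simultaneously demands both the multiplicativity of $g$ and the full $\mM$-morphism structure on $g$.
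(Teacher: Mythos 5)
Your proposal is correct and follows essentially the same route as the paper: components are induced by the preserved coequalizer presentation of $g_1$ and $g_2$ via Lemma~\ref{lem:composition}, compatibility \eqref{eq:component-compatibility} is checked after precomposing with the epimorphism $g_2\ox 1_A\ox g_1$ and collapsing $m.(g_2\ox g_1)$ through $CB^2C$ using \eqref{eq:component-compatibility} and \eqref{eq:multiplicative} for $g$, density follows from right-cancellativity of $\cq$, and multiplicativity from precomposing with $11g_1$. The only minor slip is the phrasing that both sides of the multiplicativity check ``reduce to $g_1.f_11.1f_11$'': one side reduces to $g_1.f_11.1f_11$ and the other to $g_1.f_11.m11$, and these agree by \eqref{eq:multiplicative} for $f$, exactly as you intend.
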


\proof 
First, since $g$ is dense, $g_1\colon BC\to C$ is the coequalizer of maps
$s,s'\colon X\to BC$, and this coequalizer is preserved by $A(-)$, so that
also $1g_1\colon ABC\to AC$ is the coequalizer of $1s$ and $1s'$. By 
Lemma~\ref{lem:composition}, the composites $g_1.f_11.1s$ and $g_1.f_11.1s'$
are equal, and so there is a unique map $(g\bullet f)_1$ making the diagram in
\eqref{eq:c} commute; similarly there is a unique induced $(g\bullet f)_2$.  

Next we show that $(g\bullet f)_1$ and $(g\bullet f)_2$ are the components of
an $\mM$-morphism. To do so, observe that in the commutative diagrams  
$$
\xymatrix@C=15pt{
CBABC \ar[rr]^-{g_2111} \ar[dr]^{11f_1 1} \ar[d]_{1f_211} &&
CABC \ar[r]^-{11g_1} \ar[d]_-{1f_11} \ar@{}[rd]|-{\eqref{eq:c}} &
CAC \ar[d]^{1(g\bullet f)_1} \\
CB^2C \ar[d]_{1m1} \ar@{}[r]|-{\eqref{eq:component-compatibility}} &
CB^2C \ar[r]^-{g_211} \ar[ld]^-{1m1} \ar@{}[d]^-{~\eqref{eq:multiplicative}}&
CBC \ar[r]^{1g_1} \ar[d]_-{g_21} 
\ar@{}[rd]|-{\eqref{eq:component-compatibility}} & 
C^2 \ar[d]^{m} \\
CBC \ar[rr]_{g_21} &&
C^2 \ar[r]_{m} & 
C}
\xymatrix{
CBABC \ar[r]^-{111g_1} \ar[d]_-{1f_211} &
CBAC \ar[r]^-{g_211} \ar[d]_-{1f_21} \ar@{}[rd]|-{\eqref{eq:c}}&
CAC \ar[d]^{(g\bullet f)_21} \\
CB^2C \ar[r]^{11g_1} \ar[d]_{1m1} \ar@{}[dr]|-{~\eqref{eq:multiplicative}} &
CBC \ar[d]_-{1g_1} \ar[r]^{g_21} 
\ar@{}[rd]|-{\eqref{eq:component-compatibility}} & 
C^2 \ar[d]^-{m} \\
CBC \ar[r]_{1g_1} & 
C^2 \ar[r]_{m} & 
C}
$$
the bottom rows are equal by $\eqref{eq:component-compatibility}$, and so the
top-right paths are equal. But $g_21g_1$ in the top rows is an epimorphism
since $g$ is dense, so that   
the right verticals are equal as required.  

If $f$ is dense, then $f_11$, $g_1$, and $1g_1$ are all in \cq, hence so too
is $(g\bullet f)_1$; similarly $(g\bullet f)_2$ is in \cq and so $g\bullet f$
is dense.  

Finally we show that $g\bullet f$ is multiplicative if $f$ is so. In the
commutative diagrams
$$\xymatrix{
A^2BC \ar[r]^{11g_1} \ar[dr]_{1f_11} \ar[d]_{m11} & 
A^2C \ar[dr]^{1(g\bullet f)_1} \ar@{}[d]|-{\eqref{eq:c}} \\
ABC \ar[dr]_{f_11} \ar@{}[r]|-{~\eqref{eq:multiplicative}} & 
ABC \ar[d]^{f_11} \ar[r]^{1g_1} \ar@{}[dr]|-{~\eqref{eq:c}} & 
AC \ar[d]^{(g\bullet f)_1} \\
&  BC \ar[r]_{g_1} & C }
\xymatrix{
A^2BC \ar[r]^{11g_1}  \ar[d]_{m11} & 
A^2C \ar[d]^{m1}  \\
ABC \ar[dr]_{f_11} \ar[r]^{1g_1} & AC \ar[dr]^{(g\bullet f)_1}  
\ar@{}[d]|-{\eqref{eq:c}}  \\
&  BC \ar[r]_{g_1} & C }$$
the map $11g_1$ is an epimorphism since $g$ is dense, and so $(g\bullet
f)_1.1(g\bullet f)_1=(g\bullet f)_1.m1$ as required.  
\endproof

Next we turn to associativity of this composition. 

\begin{proposition}
Let $f\colon A\nto B$, $g\colon B\nto C$, and $h\colon C\nto D$ be
$\mM$-morphisms, and suppose that $g$ and $h$ are dense and
multiplicative. Then $(h\bullet g)\bullet f=h\bullet(g\bullet f)$.   
\end{proposition}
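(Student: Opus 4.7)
The plan is to prove $((h\bullet g)\bullet f)_1 = (h\bullet(g\bullet f))_1$; by Remark~\ref{rem:components_equivalent} this forces the two $\mM$-morphisms themselves to coincide. Both composites are well-defined $\mM$-morphisms $A\nto D$: Proposition~\ref{prop:composition}, applied with the dense and multiplicative $g$ playing the role of ``$f$'' in that statement, gives that $h\bullet g$ is itself dense and multiplicative, so $(h\bullet g)\bullet f$ exists; and $h\bullet(g\bullet f)$ exists directly from density and multiplicativity of $h$.

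The two first components are maps $AD\to D$, and the strategy is to compare them after precomposition with a suitable epimorphism from $ABCD$. Since $g_1$ and $h_1$ lie in \cq (density of $g$ and $h$), the morphisms $1g_11\colon ABCD\to ACD$ and $1h_1\colon ACD\to AD$ both lie in \cq by closure of \cq under monoidal product and composition; in particular their composite $1h_1.1g_11 = 1(h_1.g_11)\colon ABCD\to AD$ is an epimorphism. It therefore suffices to show that the two first components agree after precomposition with $1h_1.1g_11$, and we shall see that both composites equal $h_1.g_11.f_111\colon ABCD\to D$.

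Unwinding the right-associated side, the defining equation of $h\bullet(g\bullet f)$ gives $(h\bullet(g\bullet f))_1.1h_1 = h_1.(g\bullet f)_11$, and one application of functoriality followed by the defining equation of $g\bullet f$ yields
$$
(h\bullet(g\bullet f))_1.1h_1.1g_11 = h_1.((g\bullet f)_1.1g_1)1 = h_1.(g_1.f_11)1 = h_1.g_11.f_111.
$$
For the left-associated side, first rewrite $1h_1.1g_11 = 1(h\bullet g)_1.11h_1$ using the defining equation of $h\bullet g$; then the defining equation of $(h\bullet g)\bullet f$, together with the identity $f_11.11h_1 = 1h_1.f_111$ (valid since $f_1$ and $h_1$ act on disjoint tensor factors), produces
$$
((h\bullet g)\bullet f)_1.1h_1.1g_11 = (h\bullet g)_1.f_11.11h_1 = (h\bullet g)_1.1h_1.f_111 = h_1.g_11.f_111.
$$
The two sides coincide, so the first components are equal and associativity follows. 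There is no genuine obstacle here: density of $g$ and $h$ supplies exactly the epimorphism needed to license the comparison, and the remaining verification is mechanical bookkeeping with the coequalizer-defining equations of $\bullet$ and functoriality of the monoidal product.
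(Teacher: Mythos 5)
Your proposal is correct and is essentially the paper's own argument: the paper also precomposes both first components with the epimorphism $ABCD\to AD$ (written there as $1(h\bullet g)_1.11h_1$, which equals your $1h_1.1g_11$ by the defining equation of $h\bullet g$) and checks that both sides reduce to $h_1.g_11.f_111$ via the defining diagrams \eqref{eq:c} and functoriality of the monoidal product. The appeal to Remark~\ref{rem:components_equivalent} to conclude from the first components alone is also the intended step.
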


\proof
Since $11h_1\colon ABCD \to ABD$ and $1(h\bullet g)_1\colon ABD \to AD$ are epimorphisms,
this follows immediately from the commutativity of the following diagrams 
$$\xymatrix{
ABCD \ar[r]^{11h_1} \ar[d]_{f_111} & ABD \ar[dr]^{1(h\bullet g)_1} \ar[d]_{f_11} \\
BCD \ar[r]^{1h_1} \ar[dr]_{g_1 1} & BD \ar[dr]_{(h\bullet g)_1}  & 
AD \ar[d]^{((h\bullet g)\bullet f)_1} \\
& CD \ar[r]_{h_1} & D }
\xymatrix{
ABCD \ar[r]^{11h_1} \ar[d]_{f_111} \ar[dr]^{1g_11} & 
ABD \ar[dr]^{1(h\bullet g)_1} \\
BCD \ar[dr]_{g_11} & ACD \ar[r]^{1h_1} \ar[d]^{(g\bullet f)_11} & 
AD \ar[d]^{(h\bullet(g\bullet f))_1} \\
& CD \ar[r]_{h_1} & D }
$$
in which the top left square in the left diagram commutes by functoriality of
the monoidal product, and all remaining regions commute by instances of
\eqref{eq:c}.  
\endproof

As for the identity morphisms, the unital case suggests that the identity
$\mM$-morphism $i$ on a non-degenerate semigroup $A$ should have components
$i_1$ and $i_2$ equal to $m$. This does indeed define a multiplicative
$\mM$-morphism by associativity of $m$; it will be dense just when $m$ lies in
\cq. It follows from the non-degeneracy of $A$ that $i\colon A\to \mM(A)$ is a
monomorphism in $\cc$ preserved by the functor $X(-)$ for any object $X$.  

\begin{proposition}
Let $f\colon A\nto B$ be an $\mM$-morphism.
\begin{enumerate}[(i)]
\item if $f$ is dense and multiplicative, then $f\bullet i=f$;
\item if $i$ is dense, then $i\bullet f=f$.
\end{enumerate}
\end{proposition}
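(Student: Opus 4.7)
The plan is to check both equalities on first components only; by Remark~\ref{rem:components_equivalent}, the second components then automatically agree. In each case the first component of the composite is pinned down by an instance of the defining square \eqref{eq:c}, and the task reduces to verifying that $f_1$ itself satisfies that square, after which cancellation of an appropriate epimorphism gives the conclusion.

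For (i), writing out \eqref{eq:c} with the role of $g$ played by $f$ and the role of $f$ played by $i\colon A\nto A$, the map $(f\bullet i)_1\colon AB\to B$ is the unique morphism satisfying $(f\bullet i)_1.1f_1=f_1.i_11=f_1.m1$. Multiplicativity of $f$ is precisely the identity $f_1.m1=f_1.1f_1$ read off from the first square of \eqref{eq:multiplicative}, so $f_1$ satisfies the defining equation. Density of $f$ places $f_1$ in \cq, and since \cq is closed under monoidal product, $1f_1\in\cq$ and hence is an epimorphism; this forces $(f\bullet i)_1=f_1$.

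For (ii), the defining square identifies $(i\bullet f)_1$ as the unique map with $(i\bullet f)_1.1m=m.f_11$, since $i_1=m$. Here, instead of multiplicativity of $f$, one invokes Lemma~\ref{lem:module-maps}, whose first square in \eqref{eq:multiplier} is exactly $f_1.1m=m.f_11$; the lemma applies because $B$ is non-degenerate, as is automatic whenever the identity $\mM$-morphism $i$ on $B$ has been defined. Density of $i$ puts $m\in\cq$, hence $1m\in\cq$ is an epimorphism, and the same cancellation yields $(i\bullet f)_1=f_1$. No serious obstacle arises: the argument is simply a matter of aligning each hypothesis with the role it plays, with multiplicativity or the module-map lemma supplying the rewriting and density supplying the epimorphism to be cancelled.
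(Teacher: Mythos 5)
Your proof is correct and follows essentially the same route as the paper: part (i) is exactly the observation that multiplicativity of $f$ (the first square of \eqref{eq:multiplicative}) exhibits $f_1$ as satisfying the defining equation for $(f\bullet i)_1$, and part (ii) is exactly the appeal to Lemma~\ref{lem:module-maps}, with density supplying the epimorphism that forces uniqueness in each case. The paper states this in one line; your write-up just makes the same argument explicit.
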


\begin{proof}
Part (i) follows by commutativity of the diagrams in \eqref{eq:multiplicative}
and part (ii) follows by Lemma \ref{lem:module-maps}. 
\end{proof}

In particular, we now have a category.

\begin{proposition}\label{prop:category-M}
There is a category $\cm$, whose objects are the non-degenerate semigroups
with multiplication in \cq, and whose morphisms are the dense multiplicative
$\mM$-morphisms. The composite $g\bullet f$ of composable morphisms $g$ and
$f$ has components as in \eqref{eq:c}, and the identity on an object $A$ is
the $\mM$-morphism $i\colon A\nto A$ with components equal to the
multiplication $m$. The monoidal unit $I$ equipped with the trivial
multiplication is initial in the category \cm.  
\end{proposition}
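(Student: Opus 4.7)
The plan is to read off the category laws from the preceding three propositions and then supply a short initiality argument for $I$.

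The category laws are essentially already proved: Proposition~\ref{prop:composition} furnishes composition of dense multiplicative $\mM$-morphisms (with the explicit formulae in~\eqref{eq:c}), and the two propositions immediately preceding the statement establish associativity and the unit laws. All that is left at this stage is to confirm that the putative identity $i\colon A\nto A$ on an object $A$ of $\cm$, with both components equal to $m$, indeed lies in the prescribed morphism class. The compatibility diagram~\eqref{eq:component-compatibility} and the two diagrams~\eqref{eq:multiplicative} reduce to associativity of $m$, and density is exactly the hypothesis $m\in\cq$ built into the definition of objects of $\cm$.

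For initiality of $I$: the trivial multiplication on $I$ is the unit isomorphism $II\to I$, so it is non-degenerate and, being an isomorphism, lies in $\cq$; hence $I$ is an object of $\cm$. Given any object $A$ of $\cm$, I would introduce the $\mM$-morphism $!\colon I\nto A$ whose components are the unit isomorphisms $IA\to A$ and $AI\to A$. Commutativity of~\eqref{eq:component-compatibility} and of the multiplicativity diagrams~\eqref{eq:multiplicative} for $!$ both follow from monoidal coherence for unitors, and density is immediate since unitors are isomorphisms.

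The key remaining step, and the chief technical obstacle, is uniqueness. Suppose $(f_1,f_2)\colon I\nto A$ is any dense multiplicative $\mM$-morphism. By Remark~\ref{rem:components_equivalent} it suffices to prove that $f_1$ equals the unit isomorphism $u\colon IA\to A$. Since $u$ is invertible, write $f_1=h.u$ for a unique $h\colon A\to A$. Using naturality of $u$ on one side and coherence for the unitors on the other, the multiplicativity equation for $f_1$ reduces, after cancellation of a unit isomorphism, to the idempotency $h.h=h$. Density of $(f_1,f_2)$ says $f_1\in\cq$; since $u\in\cq$ as well (being an isomorphism), right-cancellation of $\cq$ yields $h\in\cq$. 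In particular $h$ is an epimorphism, so $h=h.h$ forces $h=1_A$, giving $f_1=u$ as required. This settles initiality and completes the proof.
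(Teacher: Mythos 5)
Your proof is correct and follows essentially the same route as the paper's: the category axioms are delegated to the preceding propositions, and initiality of $I$ is established by showing that any dense multiplicative $\mM$-morphism $I\nto A$ has components which are idempotent (by multiplicativity) epimorphisms (by density), hence identities. The only cosmetic difference is that you keep the unitors explicit and extract the endomorphism $h$ via right-cancellation in \cq, where the paper silently identifies $IA$ with $A$ and speaks directly of the components as endomorphisms $v_1,v_2\colon A\to A$.
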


\proof The only thing that remains to be proven is that $I$ is initial. For
each object $A$, there is an $\mM$-morphism $u\colon I\nto A$ with components
equal to the identity morphism of $A$. This is clearly multiplicative and
dense. A general $\mM$-morphism $v\colon I\nto A$ will have components given
by endomorphisms $v_1,v_2\colon A\to A$ satisfying $m.1v_1=m.v_21$. This will
be multiplicative if and only if $v_1$ and $v_2$ are idempotent, 
and it will be dense if and only if $v_1$ and $v_2$ are in \cq; but the only
epimorphic idempotents are the identities.  
\endproof

\begin{remark} \label{rem:no-braid}
The proof shows that, in fact, Proposition \ref{prop:category-M} holds
also for not necessarily braided monoidal categories $\cc$. 
\end{remark}

The category studied by Janssen and Vercruysse in
\cite{JanssenVercruysse:mba&mha} is the case where \cc consists of all modules
over a commutative ring, but where we only consider projective modules in
defining $\cm$. The construction of \cm is reminiscent of the Kleisli
construction, but does not seem literally to be an example; the obstruction is
the need to restrict to dense morphisms. 

\begin{lemma}\label{lem:i-e}
For a semigroup $A$ with non-degenerate multiplication $m$, the following
diagram commutes. 
$$
\xymatrix{
\mM(A)A\ar[r]^-{1i}\ar[d]_-{e_1} &
\mM(A)^2 \ar[d]^-m &
A \mM(A) \ar[l]_-{i1} \ar[d]^-{e_2} \\
A\ar[r]_-i &
\mM(A) &
A \ar[l]^-i}
$$
\end{lemma}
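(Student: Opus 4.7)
The plan is to verify commutativity of the left-hand square; the right-hand square follows by an entirely symmetric argument. Both $m\cdot 1i$ and $i\cdot e_1$ are morphisms $\mM(A)A\to\mM(A)$, and by the universal property of the pullback defining $\mM(A)$ together with the non-degeneracy of $A$ (Remark~\ref{rem:components_equivalent}), it suffices to verify that the two resulting $\mM$-morphisms $\mM(A)A\nto A$ have the same first component $\mM(A)A^2\to A$.

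The two inputs needed are easily identified. Since $i\colon A\to\mM(A)$ corresponds to the identity $\mM$-morphism on $A$, whose components are $(m,m)$, we have $i_1=e_1.i1=m$. The multiplication $m\colon\mM(A)^2\to\mM(A)$ from Proposition~\ref{prop:M(A)-monoid} satisfies $e_1.m1=e_1.1e_1$ by the left square of~\eqref{eq:M(A)-product}.

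Using the general identities $f_1=e_1.f1$ and $(g.h)_1=g_1.h1$ for morphisms into $\mM(A)$, one computes
\[
(m.1i)_1 = e_1.m1.1i1 = e_1.1e_1.1i1 = e_1.1(e_1.i1) = e_1.1m,
\]
and
\[
(i.e_1)_1 = i_1.e_11 = m.e_11.
\]
Hence equality of the first components reduces to $e_1.1m=m.e_11$, which is exactly the first diagram of~\eqref{eq:multiplier} applied to the $\mM$-morphism $e\colon\mM(A)\nto A$. This commutes by Lemma~\ref{lem:module-maps}.

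The right-hand square is handled symmetrically: using the right square of~\eqref{eq:M(A)-product} and $i_2=m$, one finds $(m.i1)_2 = e_2.m1$ and $(i.e_2)_2 = m.1e_2$, whose equality is the second diagram of~\eqref{eq:multiplier} for $e$, again supplied by Lemma~\ref{lem:module-maps}. No real obstacle is expected here: the proof is essentially a bookkeeping exercise in which the crucial input is recognizing that the components of $i$ are both the multiplication $m$, so the triangles involving $i$ collapse against the defining structure of $\mM(A)$ and reduce cleanly to the module-map diagrams for $e$.
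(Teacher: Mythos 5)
Your proof is correct and follows essentially the same route as the paper's: reduce to comparing first components via Remark~\ref{rem:components_equivalent}, use $i_1=m$ and the left square of \eqref{eq:M(A)-product} to compute $(m.1i)_1=e_1.1m$ and $(i.e_1)_1=m.e_11$, and close the gap with the first diagram of \eqref{eq:multiplier} for $e$, supplied by Lemma~\ref{lem:module-maps}. No discrepancies.
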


\begin{proof}
We only prove commutativity of the square on the left, symmetric reasoning
applies to the other one. In view of Remark \ref{rem:components_equivalent},
it is enough to compare the first components of the morphisms around the
square.
$$
\xymatrix{
\mM(A)A^2\ar[r]^-{1i1}\ar[rd]_-{1m} &
\mM(A)^2A\ar[r]^-{m1}\ar[d]^-{1e_1} \ar@{}[rd]|-{\eqref{eq:M(A)-product}}&
\mM(A)A\ar[d]^-{e_1}
&
\mM(A)A^2\ar[r]^-{e_11} \ar[d]_-{1m} \ar@{}[rd]|-{\eqref{eq:multiplier}}&
A^2 \ar[r]^-{i1}\ar[rd]^-m &
\mM(A)A\ar[d]^-{e_1} \\
& \mM(A)A\ar[r]_-{e_1} &
A
&
\mM(A)A\ar[rr]_-{e_1} &&
A}
$$
The triangular regions commute since the first component of $i\colon A\to \mM(A)$ is
the multiplication $m$. 
\end{proof}

\begin{proposition}\label{prop:gtilde}
Consider a closed braided monoidal category $\cc$ and semigroups
$A,B$ in $\cc$. Assume that the multiplication of $B$ is non-degenerate and
that the pullbacks $\mM(A)$ and $\mM(B)$ in \eqref{eq:M(A)} exist. Then for
any dense and multiplicative morphism $g\colon A\to \mM(B)$ there is a unique
monoid morphism $\widetilde g\colon\mM(A)\to \mM(B)$ obeying $\widetilde
g.i=g$.   
\end{proposition}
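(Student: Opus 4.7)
The plan is to define $\widetilde{g}\colon\mM(A)\to\mM(B)$ as the morphism in $\cc$ corresponding, via the pullback \eqref{eq:M(A)} for $B$, to the composite $\mM$-morphism $g\bullet e\colon\mM(A)\nto B$, where $e\colon\mM(A)\nto A$ is the universal $\mM$-morphism from Paragraph~\ref{sect:MA}. This composite exists by Proposition~\ref{prop:composition}, since $g$ is dense and multiplicative. Moreover, $g\bullet e$ is itself multiplicative because $g$ is, and $e$ is (the latter is precisely the content of \eqref{eq:M(A)-product}); hence Proposition~\ref{prop:M(A)-monoid}(ii) ensures that $\widetilde{g}$ is a monoid morphism.

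To verify $\widetilde{g}.i=g$, by Remark~\ref{rem:components_equivalent} it suffices to compare the first components of the corresponding $\mM$-morphisms, that is, to show $(g\bullet e)_1.i 1=g_1\colon AB\to B$. Pre-composing with $1g_1\colon A^2B\to AB$, which is an epimorphism since \cq is closed under the monoidal product, and using bifunctoriality together with the defining equation $(g\bullet e)_1.1g_1=g_1.e_1 1$ from \eqref{eq:c} and the identity $e_1.i 1=m_A$ (the formula for the first component of $i\colon A\to\mM(A)$ from Paragraph~\ref{sect:MA}) reduces the claim to the multiplicativity equation $g_1.m_A 1=g_1.1g_1$ of \eqref{eq:multiplicative} for $g$.

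For uniqueness, let $\widetilde{g}'\colon\mM(A)\to\mM(B)$ be another monoid morphism with $\widetilde{g}'.i=g$, and view it as a multiplicative $\mM$-morphism $\mM(A)\nto B$ through Proposition~\ref{prop:M(A)-monoid}(ii). Then $(\widetilde{g}')_1.i 1=g_1$, and combining this with the multiplicativity relation $(\widetilde{g}')_1.1(\widetilde{g}')_1=(\widetilde{g}')_1.m_{\mM(A)}1$ and the identity $m_{\mM(A)}.1i=i.e_1$ supplied by Lemma~\ref{lem:i-e} yields
\[(\widetilde{g}')_1.1g_1=(\widetilde{g}')_1.1(\widetilde{g}')_1.1i 1=(\widetilde{g}')_1.m_{\mM(A)}1.1i 1=(\widetilde{g}')_1.i 1.e_1 1=g_1.e_1 1.\]
This is the defining equation of $(g\bullet e)_1$, so cancelling the epimorphism $1g_1$ gives $(\widetilde{g}')_1=(g\bullet e)_1=(\widetilde{g})_1$, whence Remark~\ref{rem:components_equivalent} forces $\widetilde{g}'=\widetilde{g}$. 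The main subtlety is the careful bookkeeping of first versus second components and the judicious use of density to cancel regular epimorphisms; no new ingredient is needed beyond Lemma~\ref{lem:i-e}, Proposition~\ref{prop:composition}, and the universal property of $\mM(B)$.
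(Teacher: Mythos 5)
Your construction, the verification of $\widetilde g.i=g$, and the uniqueness argument all coincide with the paper's proof: you define $\widetilde g$ from $g\bullet e$, cancel the epimorphism $1g_1$ against the defining equation \eqref{eq:c} together with $e_1.i1=m$ and multiplicativity of $g$, and for uniqueness you run exactly the paper's chain $(\widetilde g')_1.1g_1=(\widetilde g')_1.1(\widetilde g')_1.1i1=(\widetilde g')_1.m1.1i1=(\widetilde g')_1.i1.e_11=g_1.e_11$ using Lemma~\ref{lem:i-e}.

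The one genuine gap is unitality. Proposition~\ref{prop:M(A)-monoid}(ii) only tells you that $\widetilde g$ is \emph{multiplicative}; since $\mM(A)$ and $\mM(B)$ are unital monoids by part (i) of that proposition, the asserted ``monoid morphism'' must also satisfy $\widetilde g.u_{\mM(A)}=u_{\mM(B)}$, and you never check this. The paper closes this by observing that unit preservation is equivalent to $g\bullet u=u$ for the $\mM$-morphism $u\colon I\nto A$ with identity components, which holds because $I$ is initial in $\cm$ (Proposition~\ref{prop:category-M}). Adding that one line completes your argument.
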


\begin{proof}
Recall from Paragraph~\ref{sect:MA} the multiplicative $\mM$-morphism $e\colon
\mM(A)\nto A$, and regard $g$ as a dense multiplicative $\mM$-morphism $A\nto
B$. The composite $g\bullet e\colon \mM(A)\nto B$ can in turn be regarded as a
multiplicative moprhism $\tilde{g}\colon \mM(A)\to \mM(B)$.  

Explicitly, the components of $\widetilde{g}$ are determined by commutativity
of the following diagrams.   
\begin{equation}\label{eq:gtilde}
\xymatrix{
\mM(A)AB \ar[r]^-{1g_1} \ar[d]_-{e_1 1} &
\mM(A)B \ar@{-->}[d]^-{\widetilde g_1}
&&
BA\mM(A) \ar[r]^-{g_21}\ar[d]_-{1e_2} &
B\mM(A) \ar@{-->}[d]^-{\widetilde g_2} \\
AB \ar[r]_-{g_1} & 
B
&&
BA \ar[r]_-{g_2} &
B .}
\end{equation}

Since $1g_1$ is an epimorphism, commutativity of 
$$
\xymatrix{
A^2B\ar[d]_-{1g_1}  \ar[dr]^-{i11} \ar@{=}[rrr] &&&
A^2B \ar[ld]_-{m1} \ar[d]^-{1g_1} \\
AB\ar[d]_-{i1} &
\mM(A)AB\ar[ld]^-{1g_1} \ar[r]^-{e_1 1} 
\ar@{}[rd]|-{\eqref{eq:gtilde}}&
AB \ar[rd]_-{g_1} 
\ar@{}[r]|-{\eqref{eq:multiplicative}}&
AB \ar[d]^-{g_1} \\
\mM(A)B\ar[rrr]_-{\widetilde g_1} &&&
B .}
$$
implies that $(\widetilde g.i)_1=\widetilde g_1.i1$ is equal to $g_1$. Hence 
it follows by Remark \ref{rem:components_equivalent} that $\widetilde g.i=g$.

Conversely, suppose  that  $h\colon\mM(A)\to\mM(B)$ is a multiplicative morphism
satisfying $h.i=g$; equivalently, $h_1.i1=g_1$. Then 
$$
\xymatrix{
\mM(A)AB \ar[r]_-{1i1} \ar[d]_-{e_11} \ar@/^1pc/[rr]^-{1g_1}&
\mM(A)^2B \ar[r]_-{1h_1} \ar[d]^-{m1} &
\mM(A)B \ar[d]^-{h_1}\\
AB \ar[r]^-{i1} \ar@/_1pc/[rr]_-{g_1}&
\mM(A)B \ar[r]^-{h_1} &
B}
$$
commutes by Lemma \ref{lem:i-e} and  Proposition
\ref{prop:M(A)-monoid}~(ii). By the uniqueness of $\widetilde g_1$ rendering
commutative the first diagram in \eqref{eq:gtilde} we conclude that
$h_1=\widetilde g_1$ and thus by Remark \ref{rem:components_equivalent}
also $h=\widetilde g$. 

It remains to see that $\widetilde g$ is unital, or equivalently that
$g\bullet u=u$; but this follows from the fact that $I$ is initial in \cm.  
\end{proof}

\begin{remark} \label{rem:A->B}
We motivated the definition of $\mM$-morphisms $f\colon A\nto B$ by the fact
that any monoid morphism $f\colon A\to B$ induces such an $\mM$-morphism with
components  
\begin{equation} \label{eq:A->B} 
\xymatrix{
AB \ar[r]^-{f1} &
B^2 \ar[r]^-m &
B
&&
BA \ar[r]^-{1f} &
B^2 \ar[r]^-m &
B}
\end{equation}
which render commutative the diagrams in \eqref{eq:component-compatibility} and
in \eqref{eq:multiplicative}. But if $A$ and $B$ are merely semigroups and
$f\colon A\to B$ multiplicative, then the same definitions still give a
multiplicative $\mM$-morphism $A\nto B$, which we call $f^\#$; it is just that
in this non-unital case the two notions are no longer equivalent. If $g\colon
B\nto C$ is a morphism in \cm and $f\colon A\to B$ a morphism in \cc, then 
the composite $g\bullet f^\#$ has components 
$(g\bullet f^\#)_1=g_1.f1$ and 
$(g\bullet f^\#)_2=g_2.1f$. 
On the other hand, if the multiplication of $B$ is non-degenerate and it
belongs to \cq, and $f\colon A\to B$ is a multiplicative {\em isomorphism} in
$\cc$ and $g\colon Z \nto A$ an arbitrary $\mM$-morphism then $(f^\#\bullet
g)_1=f.g_1.1f^{-1}$ and $( f^\# \bullet g)_2=f.g_2.f^{-1}1$. 
\end{remark}

We record various facts about the passage from $f$ to $f^\#$ in the
following proposition. 

\begin{proposition}\label{prop:f-sharp}
There is a non-full subcategory $\mathcal D$ of the category of semigroups in
\cc whose objects are those semigroups which are non-degenerate and have
multiplication in \cq, and whose morphisms $f\colon A\to B$ are those
semigroup morphisms for which the induced $f^\#$ have components lying in
\cq. There is a faithful functor $\mathcal D \to\cm$ which is the 
identity on objects and sends $f$ to $f^\#$; furthermore, this functor is full
on isomorphisms.  
\end{proposition}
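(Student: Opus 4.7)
The proposition splits into four parts: (1) verifying that $\mathcal D$ really is a subcategory of semigroups; (2) checking that $(-)^\#$ descends to a functor $\mathcal D\to\cm$; (3) faithfulness; and (4) fullness on isomorphisms. Parts (1)--(3) are essentially bookkeeping. The identity $1_A^\#$ has both components equal to $m_A\in\cq$, so identities live in $\mathcal D$; and for morphisms $f\colon A\to B$, $g\colon B\to C$ in $\mathcal D$, unwinding Remark \ref{rem:A->B} shows $(gf)^\# = g^\#\bullet f^\#$ in $\cm$, whose density (hence $gf\in\mathcal D$) is delivered by Proposition \ref{prop:composition}. That $(-)^\#$ is a functor is then immediate, using Remark \ref{rem:A->B} for the multiplicative $\mM$-morphism structure and the definition of $\mathcal D$ for density. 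Faithfulness reduces to the assertion that $m_B\circ(f\otimes 1)=m_B\circ(h\otimes 1)$ forces $f=h$, which is just non-degeneracy of $B$.

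The real work is part (4). Given an iso $\alpha\colon A\nto B$ in $\cm$ with inverse $\beta\colon B\nto A$, the task is to construct an iso $f\colon A\to B$ in $\mathcal D$ with $f^\#=\alpha$. The plan is to produce $f$ as the unique morphism factoring $\alpha_1$ as $m_B\circ(f\otimes 1_B)$, using the key relations obtained from $\beta\bullet\alpha=i_A$ and $\alpha\bullet\beta=i_B$ via \eqref{eq:c}:
\[
\beta_1\circ(\alpha_1\otimes 1_A) = m_A\circ(1_A\otimes\beta_1),\qquad
\alpha_1\circ(\beta_1\otimes 1_B) = m_B\circ(1_B\otimes\alpha_1),
\]
together with the compatibility relations \eqref{eq:component-compatibility} for $\alpha$ and $\beta$, the multiplicativity relations \eqref{eq:multiplicative}, and the density of all four component maps. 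Once such an $f$ is extracted, the equality $f^\#=\alpha$ is ensured by Remark \ref{rem:components_equivalent} (coincidence of first components); multiplicativity of $f$ as a semigroup morphism follows from multiplicativity of $\alpha$ combined with non-degeneracy of $B$; and density of $f^\#$ is immediate since its components coincide with $\alpha_1,\alpha_2\in\cq$. A parallel argument applied to $\beta$ produces a semigroup morphism $g\colon B\to A$, and the identities $\alpha\bullet\beta=i_B$, $\beta\bullet\alpha=i_A$ combined with the faithfulness established in part~(3) force $fg=1_B$ and $gf=1_A$, so $f$ is an isomorphism in $\mathcal D$.

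The main obstacle is producing the factorization $\alpha_1=m_B\circ(f\otimes 1_B)$. The point is that $\alpha_1$ being a right-$B$-module map (Lemma \ref{lem:module-maps}) represents it as a ``left-multiplier-valued'' map out of $A$, but to realise $\alpha_1$ as coming from an honest morphism $f\colon A\to B$, this multiplier must live in $B$ itself rather than merely in a multiplier object. The invertibility data encoded in $\beta$ is precisely what forces this; the delicate point is to exploit the hypothesis that $m_B\in\cq$, whose coequalizer is preserved by tensoring, together with the inverse relations above, to make the factorization canonical and to show the morphism it produces has $\alpha_2$ as its second $\mM$-component.
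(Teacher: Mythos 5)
Parts (1)--(3) of your outline are fine and agree with what the paper does (the paper likewise dismisses them as evident from the preceding discussion, and your identification of faithfulness with non-degeneracy of the codomain is correct). The problem is part (4), where you correctly isolate the relevant identities coming from $\beta\bullet\alpha=i_A$ and $\alpha\bullet\beta=i_B$ but never actually construct $f$. You propose to ``produce $f$ as the unique morphism factoring $\alpha_1$ as $m_B\circ(f\otimes 1_B)$'' and suggest that the hypothesis $m_B\in\cq$ is what makes this work. That is not a universal-property factorization: $m_B\colon B^2\to B$ being a (tensor-stable) coequalizer lets you induce maps \emph{out of} $B$, whereas here you need to manufacture a map $f$ \emph{out of} $A$ \emph{into} $B$, and the equation $\alpha_1=m_B.(f1)$ is the conclusion you want, not a recipe for $f$. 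As written, the ``delicate point'' you defer is exactly the content of the proof.

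The mechanism the paper uses is different and is the one you should adopt: exploit the density of the \emph{inverse}. Since $\beta_1\colon BA\to A$ lies in \cq, it is the coequalizer of some pair $w,v\colon X\to BA$ whose coequalizer is preserved by $(-)B$. One then checks that $\alpha_2\colon BA\to B$ coequalizes $w,v$: from \eqref{eq:component-compatibility} for $\alpha$ and the relation $m_B.(1\alpha_1)=\alpha_1.(\beta_1 1)$ (your second displayed identity, i.e.\ $\alpha\bullet\beta=i_B$) one gets $m_B.(\alpha_2 1)=\alpha_1.(\beta_1 1)$ on $BAB$; precomposing with $w1$ and $v1$ and invoking non-degeneracy of $B$ yields $\alpha_2.w=\alpha_2.v$. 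The coequalizer property of $\beta_1$ then produces a unique $f\colon A\to B$ with $f.\beta_1=\alpha_2$, whose codomain-side equations $m_B.(f1)=\alpha_1$ and multiplicativity are verified afterwards using that $\beta_1 1$ and $\beta_1\beta_1$ are epimorphisms. Without this (or an equivalent) construction, your argument for fullness on isomorphisms does not go through.
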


\proof
The existence of $\mathcal D$ and the faithful functor is evident from the
previous discussion. We shall therefore only verify the fact that the functor
is full on isomorphisms.  

Suppose then that $f\colon A\nto B$ is an isomorphism in $\cm$, say with
inverse $g$. The components $g_1$ and $g_2$ of $g$ lie in \cq, thus in
particular $g_1$ is the coequalizer of a pair $w,v\colon X\to BA$ of morphisms
in \cc. In the diagram \begin{equation}\label{eq:f_flat} 
\xymatrix{
&& B^2 \ar[dr]^{m} \ar@{}[d]|{~\eqref{eq:component-compatibility}} \\
XB \ar@<2pt>[r]^-{w1} \ar@<-2pt>[r]_-{v1} & 
BAB \ar[ur]^{f_21} \ar[r]^{1f_1} \ar[dr]_{g_11} & B^2 \ar[r]^{m} & B \\
&& AB \ar[ur]_{f_1} }
\end{equation}
the lower region on the right commutes since $f\bullet g=i$. Since $w1$ and
$v1$ agree when composed with the lower path, they agree when composed with
the upper path. By non-degeneracy of the multiplication, it follows that
$f_2.w=f_2.v$, and so there is a unique $f^\flat\colon A\to B$
satisfying $f^\flat .g_1=f_2$.  

Using \eqref{eq:component-compatibility} together with the associativity and
the non-degeneracy of the multiplication, commutativity of the lower triangle
of \eqref{eq:f_flat} is seen to be equivalent to the commutativity of the
region marked by $(\ast)$ in
$$
\xymatrix{
(BA)^2 \ar[rrr]^-{11g_1} \ar[d]_-{g_1g_1} \ar[rd]^-{f_211} &&&
BA^2 \ar[rr]^-{g_11} \ar[rd]^-{1m} \ar[ld]_-{f_21} 
\ar@{}[dd]|-{\eqref{eq:multiplicative}} &
\ar@{}[d]^-{\eqref{eq:multiplier}}&
A^2 \ar[d]^-m \\
A^2 \ar[d]_-{f^\flat f^\flat} &
B^2A \ar[ld]^-{1f_2} \ar[r]^-{1g_1} \ar@{}[rd]|-{(\ast)} &
BA \ar[rd]_-{f_2} &&
BA \ar[ld]^-{f_2} \ar[r]^-{g_1} &
A \ar[d]^-{f^\flat} \\
B^2 \ar[rrr]_-m &&&
B \ar@{=}[rr] &&
B.}
$$
Since $g_1g_1\colon(BA)^2\to A^2$ is epi, commutativity of this proves that
$f^\flat$ is multiplicative. Using that $g_11\colon BAB \to AB$ is epi, it follows
by the commutativity of the square in \eqref{eq:f_flat} that $m.f^\flat 1=f_1$
so that $(f^\flat)^\#=f$.  

\endproof


\section{Monoidality}

In any braided monoidal category, the monoidal product of semigroups $A$ and $B$
is again a semigroup with multiplication
\begin{equation}\label{eq:AB-product}
\xymatrix{
(AB)^2 \ar[r]^-{1c1} &
A^2B^2 \ar[r]^-{mm} &
AB.}
\end{equation}
Our aim is to extend this construction to a monoidal structure on the category
$\cm$ of Proposition~\ref{prop:category-M}. While the category \cm is
also available for not necessarily braided monoidal categories \cc (see
Remark \ref{rem:no-braid}), its monoidal structure makes essential use of the
braiding of \cc.

\begin{proposition}\label{prop:non-d-tensor}
If the semigroups $A$ and $B$ are non-degenerate, then so is their monoidal  product $AB$. 
\end{proposition}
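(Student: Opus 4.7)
The plan is to verify the two injectivity conditions defining non-degeneracy of $AB$ separately; by symmetry it suffices to treat the first. Suppose therefore that $f,g\colon X\to Y(AB)$ satisfy $(1 m_{AB})(f 1)=(1 m_{AB})(g 1)$, with the aim of deducing $f=g$. The strategy is to peel off non-degeneracy of $B$ first, then perform a braiding rewrite and conclude by non-degeneracy of $A$.

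For the first reduction, I would use $m_{AB}=(m_A m_B)(1 c_{B,A} 1)$ together with the factorization $1 m_A m_B=(1 1 m_B)(1 m_A 1 1)\colon YAABB\to YABB\to YAB$ to rewrite the hypothesis as
\[
(1 1 m_B)(\psi_f  1)=(1 1 m_B)(\psi_g  1)\colon XAB\to YAB,
\]
where $\psi_h:=(1 m_A 1)(1 1 c_{B,A})(h 1)\colon XA\to YAB$ for $h\in\{f,g\}$. Reading the codomain $YAB$ as $(YA)B$, non-degeneracy of $B$ applied with $X$ replaced by $XA$ and $Y$ by $YA$ then forces $\psi_f=\psi_g$.

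Next, post-compose $\psi_h$ with the isomorphism $1 c_{B,A}^{-1}\colon YAB\to YBA$. The central calculation, using naturality of $c_{B,-}^{-1}$ at $m_A\colon A^2\to A$ together with the hexagon identity $c_{B,A^2}^{-1}=(c_{B,A}^{-1} 1)(1 c_{B,A}^{-1})$, identifies
\[
(1 c_{B,A}^{-1})\psi_h=(1 1 m_A)(\overline h  1)\colon XA\to YBA,
\]
where $\overline h:=(1 c_{B,A}^{-1}) h\colon X\to Y(BA)$; the simplification occurs because the outer $c_{B,A}^{-1}$ pulled through $m_A 1$ by naturality cancels the inner $c_{B,A}$ appearing in $\psi_h$. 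Since $1 c_{B,A}^{-1}$ is invertible, $\psi_f=\psi_g$ gives $(1 1 m_A)(\overline f  1)=(1 1 m_A)(\overline g  1)$, and non-degeneracy of $A$ applied with codomain $(YB)A$ yields $\overline f=\overline g$; invertibility of $1 c_{B,A}^{-1}$ finally gives $f=g$. The only real work is this braiding/hexagon calculation, and the point is that the inner $c_{B,A}$ and outer $c_{B,A}^{-1}$ cancel, so the argument goes through in the braided (not just symmetric) setting.
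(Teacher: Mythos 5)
Your proof is correct and takes essentially the same route as the paper's: both peel off one multiplication at a time, applying non-degeneracy of one tensor factor, then performing a naturality/hexagon rewrite of the braiding so that the inner and outer braidings cancel, and finally applying non-degeneracy of the other factor. The only difference is that you treat the "right-multiplication" half explicitly where the paper treats the other half, each deferring to duality for the remaining case.
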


\proof
We check that the map sending a morphism $s\colon X\to AB$ to the morphism
$\Phi(s)\colon ABX\to AB$ given by  
$$\xymatrix{
ABX \ar[r]^{11s} & (AB)^2 \ar[r]^{1c1} & A^2B^2 \ar[r]^{mm} & AB }$$
is injective; the other half holds dually. By non-degeneracy of $A$, if we
know $\Phi(s)$ then we know the upper, and also the lower, composite in the
diagram 
$$\xymatrix{
BX \ar[r]^{1s} & BAB \ar[dr]_{1c^{-1}} \ar@{=}[rr] && BAB \ar[r]^{c1} & 
AB^2 \ar[r]^{1m} & AB \\
&& B^2A \ar[ur]_{1c} \ar[rr]_{m1} && BA \ar[ur]_c 
}$$
but now by non-degeneracy of $B$ we know $c^{-1}.s$ and so in turn we know $s$
as required.
\endproof 

For semigroup morphims $f:A\to B$ and $f':A'\to B'$, also the monoidal
product $ff':AA'\to BB'$ is compatible with the multiplication
\eqref{eq:AB-product}. The components of $(ff')^\#$ are
$$
\xymatrix{
AA'BB' \ar[r]^-{ff'11} &
(BB')^2 \ar[r]^-{1c1} &
B^2B^{\prime 2} \ar[r]^-{mm'} &
BB' &
B^2B^{\prime 2} \ar[l]_-{mm'} &
(BB')^2 \ar[l]_-{1c1} &
BB'AA' \ar[l]_-{11ff'} 
},
$$
motivating the following construction for more general \mM-morphisms.

\begin{proposition}\label{prop:monoidal-product}
If $f\colon A\nto B$ and $f'\colon A'\nto B'$ are $\mM$-morphisms, then the
pair  
$$
\xymatrix{
AA' BB' \ar[r]^-{1c1} &
ABA' B' \ar[r]^-{f_1 f'_1} &
BB'
&&
BB' AA' \ar[r]^-{1c1} &
BAB' A' \ar[r]^-{f_2 f'_2} &
BB'}
$$
defines an $\mM$-morphism $AA'\nto BB'$, which is multiplicative or dense if
$f$ and $f'$ are so.  
\end{proposition}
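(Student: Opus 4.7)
The plan is to handle the three assertions in order, with density being immediate and the $\mM$-morphism and multiplicativity conditions reducing, by naturality of the braiding, to the corresponding assertions for $f$ and $f'$ separately.

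For density, write $(f \otimes f')_1 = (f_1 f'_1) \circ (1c1)$. The braiding $1c1$ is an isomorphism, hence in \cq; since $f_1$ and $f'_1$ belong to \cq by density of $f$ and $f'$, and \cq is closed under the monoidal product, the morphism $f_1 f'_1$ lies in \cq; by closure of \cq under composition, so does $(f \otimes f')_1$. The argument for $(f \otimes f')_2$ is symmetric.

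For the $\mM$-morphism condition, I would verify \eqref{eq:component-compatibility} applied to the proposed pair by expanding both composites around the target square. The multiplication on $BB'$ is $m_{BB'} = (mm') \circ (1c1)$ as in \eqref{eq:AB-product}. Substituting this together with the definitions of $(f\otimes f')_1$ and $(f\otimes f')_2$ on the boundary of the square, one obtains two composites on $BB'AA'BB'$ each built from $f_1,f_2,f'_1,f'_2,m,m'$ and several braidings. Sliding the instances of $c$ past $f_1,f_2,f'_1,f'_2,m,m'$ by naturality, both sides reorganize into a horizontal juxtaposition of the \eqref{eq:component-compatibility} square for $f$ alongside the \eqref{eq:component-compatibility} square for $f'$, joined by braidings on the boundary. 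Each of these two sub-squares commutes by hypothesis, which gives the desired equality.

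For multiplicativity, assume $f$ and $f'$ each satisfy \eqref{eq:multiplicative}. By Remark \ref{rem:components_equivalent} it suffices to verify the first diagram of \eqref{eq:multiplicative} for $f\otimes f'$. The multiplication on $AA'$ has the same shape as that on $BB'$, and the same strategy applies: expand using the definitions, use naturality of $c$ to separate the $(A,B)$-threads from the $(A',B')$-threads, and conclude from the first square of \eqref{eq:multiplicative} for $f$ together with its counterpart for $f'$. The main obstacle is purely notational rather than conceptual: the sheer number of braidings and their interactions makes a written pasting-diagram argument bulky. The verification is cleanest in string-diagram notation in the braided monoidal category \cc, where each step is an evident application of naturality of $c$ together with the hypothesis on $f$ or on $f'$.
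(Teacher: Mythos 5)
Your proposal is correct and follows essentially the same route as the paper: density via closure of \cq under monoidal product and composition with the isomorphism $1c1$, and the \eqref{eq:component-compatibility} and \eqref{eq:multiplicative} conditions (the latter reduced to one diagram via Remark~\ref{rem:components_equivalent}) by using naturality of the braiding to separate the two strands and invoke the corresponding squares for $f$ and $f'$. The paper simply carries out the diagram chases you sketch as explicit pasting diagrams.
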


\begin{proof}
The stated morphisms render commutative the diagram of
\eqref{eq:component-compatibility} by commutativity of 
$$
\xymatrix@C=40pt{
BB'AA'BB'\ar[r]^-{111c1}\ar[d]_-{1c111} &
BB'ABA'B' \ar[r]^-{11f_1f'_1} \ar[d]^-{1c_{B',AB}11} &
(BB')^2\ar[d]^-{1c1} \\
BAB'A'BB'\ar[r]^-{11c_{B'A',B}1} \ar[d]_-{f_2f'_211} &
BABB'A'B' \ar[r]^-{1f_11f'_1} \ar[d]^-{f_21f'_21} 
\ar@{}[rd]|-{\eqref{eq:component-compatibility}}&
B^2B^{\prime 2}\ar[d]^-{mm'} \\
(BB')^2 \ar[r]_-{1c1} &
B^2B^{\prime 2}\ar[r]_-{mm'} &
BB' .}
$$
When it comes to multiplicativity, in view of Remark
\ref{rem:components_equivalent}, it is enough to check the commutativity of
one of the diagrams in \eqref{eq:multiplicative}. In the case of the first
one, for example, it follows by the commutativity of 
$$
\xymatrix@C=40pt{
(AA')^2 BB' \ar[r]^-{111c1} \ar[d]_-{1c111} &
AA'ABA'B' \ar[r]^-{11f_1f'_1}\ar[d]^-{1c_{A',AB}11} &
AA'BB'\ar[d]^-{1c1} \\
A^2A^{\prime 2}BB' \ar[r]^-{11c_{A'A',B}1} \ar[d]_-{mm'11} &
A^2BA^{\prime 2}B' \ar[r]^-{1f_11f'_1} \ar[d]^-{m1m'1} 
\ar@{}[rd]|-{\eqref{eq:multiplicative}}&
ABA'B'\ar[d]^-{f_1f'_1} \\
AA'BB' \ar[r]_-{1c1} &
ABA'B' \ar[r]_-{f_1f'_1} &
BB' .}
$$
Finally if $f$ and $f'$ are dense, then $f_1$ and $f'_1$ are in \cq, and so
their  monoidal  product $f_1f'_1$ is in \cq, as is its compsite
$(ff')_1$ with $1c1$. 
\end{proof}

\begin{proposition}\label{prop:monoidality_of_M}
The category \cm is monoidal with respect to the usual monoidal product of
semigroups, and with monoidal product of morphisms given as in
Proposition~\ref{prop:monoidal-product}.  
\end{proposition}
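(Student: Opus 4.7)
The plan is to carry over the familiar monoidal structure on semigroups in $\cc$ (with the tensor product $AB$, the monoidal unit $I$ with trivial multiplication, and the standard associator $\alpha$ and unitors $\lambda$, $\rho$) through the functor $(-)^\#\colon\mathcal D\to\cm$ of Proposition~\ref{prop:f-sharp}. That is, on objects I would use the usual tensor of semigroups, on morphisms the construction of Proposition~\ref{prop:monoidal-product}, and for the structural isomorphisms I would take $\alpha^\#$, $\lambda^\#$, $\rho^\#$.

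First I would verify that $\cm$ is closed under the tensor on objects. Non-degeneracy of $AB$ is Proposition~\ref{prop:non-d-tensor}, while the multiplication of $AB$, namely the composite $mm.1c1$, lies in $\cq$: the braiding $1c1$ is an isomorphism and hence in $\cq$, each of $m$ and $m'$ is in $\cq$, and $\cq$ is closed under composition and under monoidal product, so $mm'=m\otimes m'\in\cq$. The unit $I$ with trivial multiplication trivially satisfies both conditions.

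Next I would verify bifunctoriality of $\otimes$ on $\cm$-morphisms; by Remark~\ref{rem:components_equivalent} it suffices to compare first components in each case. Identity preservation $i_A\otimes i_B=i_{AB}$ is immediate by unfolding the definitions. Associativity, $(g\bullet f)\otimes(g'\bullet f')=(g\otimes g')\bullet(f\otimes f')$, is a diagram chase starting from the defining square~\eqref{eq:c} for each $\bullet$, inserting the tensor formula of Proposition~\ref{prop:monoidal-product}, and moving braidings past the various $f_i$ components using naturality of $c$; the density hypothesis ensures the required epimorphisms through which one may cancel. This bifunctoriality step, together with naturality of the associator and unitors, will be the main technical obstacle.

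Third, since $\alpha\colon(AB)C\to A(BC)$ and $\lambda\colon IA\to A$, $\rho\colon AI\to A$ are semigroup isomorphisms in $\cc$, and the target of each has multiplication in $\cq$, the induced components $\alpha^\#_1=m.\alpha1$ etc.\ are composites of an isomorphism with a map in $\cq$ and hence lie in $\cq$. Thus $\alpha$, $\lambda$, $\rho$ are morphisms in the subcategory $\mathcal D$ of Proposition~\ref{prop:f-sharp}, and their images $\alpha^\#$, $\lambda^\#$, $\rho^\#$ are isomorphisms in $\cm$. Naturality of these families in $\cm$ reduces, via Remark~\ref{rem:components_equivalent}, to naturality of $\alpha,\lambda,\rho$ in $\cc$ together with the tensor formula already verified.

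Finally, the pentagon and triangle axioms are equations between iso\-morph\-isms in $\cm$ all of which lie in the image of the faithful functor $\mathcal D\to\cm$. Hence they hold in $\cm$ if and only if they hold in the full subcategory of the category of semigroups spanned by non-degenerate semigroups with multiplication in $\cq$, and there they are inherited from the coherence of the ambient braided monoidal category $\cc$.
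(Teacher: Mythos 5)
Your proposal is correct and follows essentially the same route as the paper: the structural isomorphisms are taken to be $\alpha^\#$, $\lambda^\#$, $\rho^\#$ via Remark~\ref{rem:A->B}, naturality is reduced to naturality in $\cc$, and the only substantive point is functoriality of the monoidal product, which the paper establishes by exactly the diagram chase you describe (unfolding \eqref{eq:c}, the formula of Proposition~\ref{prop:monoidal-product}, and naturality of the braiding, with density supplying the epimorphisms needed for uniqueness). The only quibble is that the coherence axioms need merely functoriality of $\mathcal D\to\cm$ (equations in $\mathcal D$ are preserved by any functor), not its faithfulness.
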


\proof
The associativity and unit isomorphisms are inherited from \cc as in
Remark~\ref{rem:A->B}. The naturality of these isomorphisms follows from their
naturality in \cc, using the description in Remark~\ref{rem:A->B} 
of composition in \cm with $g^\#$ for an isomorphism $g$. It remains only to
check that the monoidal product is functorial.  

Given morphisms $f\colon A\nto B$, $g\colon B\nto C$, $f'\colon A'\nto B'$,
and $g'\colon B'\nto C'$ in \cm, the right vertical in the diagram 
$$\xymatrix @C3pc {
AA'BB'CC' \ar[r]^{111c1} \ar[d]_{1c111} & 
AA'BCB'C' \ar[r]^{11g_1g'_1} \ar[d]^{1c111} & 
AA'CC' \ar[dd]^{1c1} \\
ABA'B'CC' \ar[r]^{111c1} \ar[dd]_{f_1f'_111} & ABA'CB'C' \ar[d]^{11c11} \\
& ABCA'B'C' \ar[r]^{1g_11g'_1} \ar[d]^{f_11f'_11} & 
ACA'C' \ar[d]^{(g\bullet f)_1(g'\bullet f')_1} \\
BB'CC' \ar[r]_{1c1} & BCB'C' \ar[r]_{g_1g'_1} & CC' }$$
is the first component of $(g\bullet f)(g'\bullet f')$, but commutativity of
the diagram means that it satisfies the defining property of the first
component of $gg'\bullet ff'$. Thus the monoidal product preserves
composition; preservation of identities is straightforward.   
\endproof


\section{Multiplier bimonoids as comonoids}

One of several equivalent ways of describing bimonoids in a braided monoidal
category is to say that they are comonoids in the monoidal category of
monoids. Our aim is to give an analogous description of (certain)
multiplier bimonoids in \cite{BohmLack:braided_mba} as comonoids in the
monoidal category \cm. This allows us to define morphisms of these multiplier
bimonoids as comonoid morphisms.  

\begin{theorem} \label{thm:mbm_vs_comonoid}
Let $\cc$ be a braided monoidal category satisfying the standing assumptions
of Section~\ref{sect:assumptions}, and let \cm be the induced monoidal
category as in Propositions \ref{prop:category-M} and
\ref{prop:monoidality_of_M}.  For an object $A$ of  $\cm$, and for morphisms
$t_1,t_2\colon A^2\to A^2$ and $e\colon A \to I$ in $\cc$, the following assertions are
equivalent. 

(i) There is a comonoid in $\cm$ with counit $(e:A\to I \leftarrow A:e)$ and
comultiplication 
$$
\xymatrix{
(d_1:A^3\ar[r]^-{1c^{-1}} &
A^3 \ar[r]^-{t_1 1} &
A^3 \ar[r]^-{1c} &
A^3 \ar[r]^-{m1} &
A^2 &
A^3 \ar[l]_-{1m} &
A^3 \ar[l]_-{c1} &
A^3 \ar[l]_-{1t_2} &
A^3 \ar[l]_-{c^{-1} 1}:d_2)}.
$$

(ii) There is a multiplier bimonoid $(A,t_1,t_2,e)$ in $\cc$ such that 
\begin{itemize}
\item the resulting multiplication $e1.t_1=1e.t_2$  is equal to
  the given one $m\colon A^2\to A$,  
\item the counit $e$, and the morphisms $d_1$ and $d_2$ in part (i) lie in \cq.
\end{itemize}
\end{theorem}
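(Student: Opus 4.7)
The plan is to exploit the fact that the data in (i) and (ii) consist of exactly the same morphisms: the semigroup $A\in\cm$, the pair $t_1,t_2\colon A^2\to A^2$, and the counit $e\colon A\to I$. The formulas specified for $d_1$ and $d_2$ set up a bijection between comultiplications $d\colon A\nto A\ox A$ of the stated shape and pairs $(t_1,t_2)$ in \cc. What remains is to show that, under this identification, the comonoid axioms on $(A,d,e)$ in \cm translate precisely into the multiplier bimonoid axioms of \cite{BohmLack:braided_mba} together with the stated compatibilities involving $m$ and \cq.

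First I would unpack what it means for $d$, with components of the given form, to define a dense multiplicative $\mM$-morphism. Density is literally the hypothesis $d_1,d_2\in\cq$. The compatibility square \eqref{eq:component-compatibility} for the given $d_1,d_2$ expands, after substituting the explicit formulas and using associativity of $m$ together with the naturality and hexagon axioms for the braiding, into the defining compatibility relating $t_1$ and $t_2$ in a multiplier bimonoid. Multiplicativity of $d$ -- the two squares of \eqref{eq:multiplicative} -- unpacks into the two axioms asserting that $t_1$ and $t_2$ intertwine appropriately with $m1$ and $1m$ on $A^3$.

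Next I would handle the counit. Since $I$ with the trivial multiplication is initial in \cm by Proposition \ref{prop:category-M}, a counit in \cm is simply a morphism $e\colon A\to I$ in \cc lying in \cq, and both of its $\mM$-components are $e$ modulo unitors. A direct computation using Propositions \ref{prop:monoidal-product} and \ref{prop:composition} shows that the defining equation \eqref{eq:c} for the first component of $(e\ox 1_A)\bullet d$, after cancelling the regular epimorphism $1(e\ox 1_A)_1$ and pushing the braidings through, expresses this component in terms of $e1\cdot t_1$. Equating it with the first component $m$ of the identity $\mM$-morphism on $A$ forces $e1\cdot t_1=m$; a symmetric computation with $(1_A\ox e)\bullet d$ forces $1e\cdot t_2=m$. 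Thus the two counit axioms of a comonoid in \cm correspond exactly to the stipulated counit/multiplication condition of (ii).

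The hard part will be coassociativity, $(d\ox 1_A)\bullet d=(1_A\ox d)\bullet d$, which must be matched with the remaining coassociativity-type multiplier bimonoid axioms that relate iterated applications of $t_1$ and $t_2$. Expanding both sides via Propositions \ref{prop:monoidal-product} and \ref{prop:composition} produces long composites on $A^n$ built from $t_1,t_2,m$ and braidings; density of $d$ is essential because it allows us to cancel -- through Lemma \ref{lem:composition} and non-degeneracy -- the regular epimorphism factors $1(d\ox 1_A)_1$ and $1(1_A\ox d)_1$ that arise when solving for the first components of the composites. Once those are cancelled and the braidings are reassembled into standard position, the equality of the two sides reduces to a clean identity on $A^n$ which is exactly the remaining multiplier bimonoid axiom. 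Both implications then follow from this dictionary: given (ii), defining $d$ by the stated formulas and running the above verifications produces a comonoid in \cm; given (i), reading off $t_1,t_2$ from $d$ and running the same equivalences backwards verifies each multiplier bimonoid axiom in turn.
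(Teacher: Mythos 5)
Your overall strategy -- unpack each comonoid axiom in $\cm$ for the given $d_1,d_2,e$ and match it against the multiplier bimonoid axioms -- is the right starting point, and your treatment of density and of the counit conditions is essentially what the paper does (though note that the counit must also be a \emph{multiplicative} $\mM$-morphism, which contributes the extra identity $e.m=ee$ that you omit). The genuine gap is in your final claim that the comonoid axioms ``translate precisely into the multiplier bimonoid axioms.'' They do not. What the comonoid axioms actually yield is the list: $e.m=ee$, the \emph{short} compatibility $m1.1t_1=1m.t_21$, the \emph{short} fusion equation $m1.c^{-1}1.1t_1.c1.1t_1=t_1.m1$ (coming from multiplicativity of $d$, not from coassociativity), the counit identities $e1.t_1=m=1e.t_2$, and -- from coassociativity -- the fusion equation $t_11.1c.t_11.1c^{-1}.1t_1=1t_1.t_11$. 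By contrast, the definition of multiplier bimonoid (via \cite[Proposition 3.7]{BohmLack:braided_mba}) replaces the two ``short'' conditions by the single compatibility condition $t_21.1t_1=1t_1.t_21$, which involves no occurrence of $m$ at all. These are not formally the same statements, and the most substantive part of the proof is showing that, \emph{in the presence of} the counit conditions and the fusion equation, the conjunction of the two short conditions is equivalent to the full compatibility condition. One direction invokes specific results from the earlier paper; the converse requires a dedicated string computation that also uses the derived identity $t_1.1m=1m.t_11$ (itself extracted from the short compatibility by a non-degeneracy argument). Your proposal never confronts this mismatch, so the dictionary you describe does not close.

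A secondary weakness: your account of coassociativity is a plan rather than an argument. Identifying the unique right vertical of the coassociativity diagrams is not a matter of ``reassembling braidings into standard position''; it requires first proving the auxiliary linearity $t_1.1m=1m.t_11$ and then a genuine computation (using the short fusion equation) to show the right vertical equals $d_11.c^{-1}c.1t_11.cc^{-1}$, after which coassociativity reduces to the fusion equation. Without that intermediate identification the reduction you assert cannot be carried out.
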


\begin{proof}
Let us spell out what is being asserted in (i). An \mM-morphism $A\nto I$ is
just a morphism $e:A\to I$ in \cc; it will be multiplicative as
an \mM-morphism if and only if it is multiplicative as a \cc-morphism, in the
sense that  
\begin{equation}\label{eq:e-multiplicative} 
e.m=ee 
\end{equation}
and it will be dense if and only if it lies in \cq.  
Using the associativity and the non-degeneracy of the multiplication, we see
that the pair $(d_1,d_2)$ renders commutative
\eqref{eq:component-compatibility} if and only if 
\begin{equation}\label{eq:t_1-2_short_compatibility}
m1.1t_1=1m.t_21 ,
\end{equation}
and it renders commutative the first diagram in \eqref{eq:multiplicative},
meaning
\begin{equation}\label{eq:d-short-fusion}
  d_1.1d_1 = d_1.m11, 
\end{equation}
if and only if the `short fusion equation' 
\begin{equation}\label{eq:short_fusion}
m1.c^{-1}1.1t_1.c1.1t_1=t_1.m1
\end{equation}
holds. 

From \eqref{eq:t_1-2_short_compatibility} it follows that 
$$
\xymatrix{
A^4\ar[rr]^-{11m} \ar[dd]_-{1t_11} \ar[rd]^-{t_211}&&
A^3 \ar[r]^-{1t_1} \ar[d]^-{t_21} &
A^3 \ar[ddd]^-{m1}\\
\ar@{}[rd]|-{\eqref{eq:t_1-2_short_compatibility}} & 
A^4 \ar[r]^-{11m} \ar[d]^-{1m1} &
A^3 \ar[rdd]^-{1m} \ar@{}[r]|-{\eqref{eq:t_1-2_short_compatibility}} &\\
A^4 \ar[d]_-{11m} \ar[r]^-{m11} &
A^3 \ar[rrd]_-{1m} \ar@{}[rr]|(.42){\textrm{(associativity)}}&&\\
A^3 \ar[rrr]_-{m1} &&&
A^2}
$$
commutes; hence by the non-degeneracy of $m$,
\begin{equation}\label{eq:t_1_A-linear}
t_1.1m=1m.t_11
\end{equation}
or equivalently 
\begin{equation}\label{eq:d_1_A-linear}
  d_1.11m = 1m.d_1.
\end{equation}

The \mM-morphism $e:A \nto I$ is a left counit for the comultiplication $d:A
\nto A^2$ if and only if either (and hence by Remark
\ref{rem:components_equivalent} both) of the diagrams 
$$
\xymatrix{
A^4 \ar[r]^-{1em} \ar[d]^-{d_11} &
A^2 \ar[d]_-m
&
A^4 \ar[r]^-{c11} \ar[d]^-{1d_2} &
A^4 \ar[r]^-{em1} &
A^2 \ar[d]_-m \\
A^3 \ar[r]_-{em} &
A
&
A^3 \ar[r]_-{c1} &
A^3 \ar[r]_-{em} &
A}
$$
commutes. Using \eqref{eq:e-multiplicative}, the associativity and the
non-degeneracy of $m$, and the fact that $1e1$ is an epimorphism, they
are seen to be equivalent to
\begin{equation}\label{eq:t_1-m}
e1.t_1=m
\end{equation}
and
\begin{equation}\label{eq:t_2-counital}
e1.t_2=e1,
\end{equation}
respectively. Symmetrically, $e\colon A \nto I$ is a right counit if and only
if  
\begin{equation}\label{eq:t_2-m}
1e.t_2=m,
\end{equation}
equivalently,
\begin{equation}\label{eq:t_1-counital}
1e.t_1=1e.
\end{equation}
Finally, $d\colon A \nto A^2$ is coassociative if and only if the uniquely
determined right verticals of the the diagrams 
\begin{equation}\label{coasociativity-1}
\xymatrix{
A^6 \ar[r]^-{11c11} \ar[d]^-{d_1111} &
A^6 \ar[r]^-{111c1} &
A^6 \ar[r]^-{1d_1m} &
A^4 \ar@{-->}[d]
&
A^6 \ar[r]^-{11c11} \ar[d]^-{d_1111} &
A^6 \ar[r]^-{1md_1} &
A^4 \ar@{-->}[d] \\
A^5 \ar[r]_-{1c11} &
A^5 \ar[r]_-{11c1} &
A^5 \ar[r]_-{d_1m} &
A^3
&
A^5 \ar[r]_-{1c11} &
A^5 \ar[r]_-{md_1} &
A^3}
\end{equation}
are equal to each other. (Clearly there is an equivalent equation
involving $d_2$.) 

In the following string calculation, the first equality holds by
\eqref{eq:t_1_A-linear} and the second by \eqref{eq:d-short-fusion} 
\begin{equation*}
  \begin{tikzpicture}
\path (1,2) node[arr,name=d1u] {$d_1$}
(1,1) node[arr,name=t1] {$t_1$}
(0.6,1) node[empty,name=x] {} 
(1,0) node[arr,name=d1l] {$d_1$}
(2,2) node[empty,name=m] {}
(0,3) node[empty,name=in1] {}
(0.5,3) node[empty,name=in2] {}
(1,3) node[empty,name=in3] {} 
(1.5,3) node[empty,name=in4] {}
(2,3) node[empty,name=in5] {}
(2.5,3) node[empty,name=in6] {}
(0,-1) node[empty,name=b1] {}
(1,-1) node[empty,name=b2] {}
(2,-1) node[empty,name=b3] {};
\path[braid, name path=bin1] (in1) to[out=270,in=135] (t1);
\draw[braid] (in2) to[out=270,in=135] (d1u);
\path[braid,name path=bin3] (in3) to[out=270,in=165] (m);
\draw[braid,name path=bin4] (in4) to[out=270,in=90] (d1u);
\draw[braid,name path=bin5] (in5) to[out=270,in=45] (d1u);
\fill[white,name intersections={of=bin3 and bin4}] (intersection-1) circle(0.1);
\fill[white,name intersections={of=bin3 and bin5}] (intersection-1) circle(0.1);
\draw[braid] (in3) to[out=270,in=165] (m);
\draw[braid] (in6) to[out=270,in=15] (m);
\path[braid, name path=mt] (m) to[out=270,in=45] (t1);
\path[braid, name path=bb3] (t1) to[out=315,in=90] (b3);
\path[braid, name path=td] (t1) to[out=225,in=135] (d1l);
\draw[braid, name path=dd1] (d1u) to[out=225,in=90] (x) to[out=270,in=90] (d1l);
\fill[white,name intersections={of=bin1 and dd1}] (intersection-1) circle(0.1);
\draw[braid] (in1) to[out=270,in=135] (t1);
\fill[white,name intersections={of=td and dd1}] (intersection-1) circle(0.1);
\draw[braid] (t1) to[out=225,in=135] (d1l);
\draw[braid, name path=dd2] (d1u) to[out=315,in=45] (d1l);
\fill[white,name intersections={of=mt and dd2}] (intersection-1) circle(0.1);
\draw[braid] (m) to[out=270,in=45] (t1);
\fill[white,name intersections={of=bb3 and dd2}] (intersection-1) circle(0.1);
\draw[braid] (t1) to[out=315,in=90] (b3);
\draw[braid] (d1l) to[out=285,in=90] (b2);
\draw[braid] (d1l) to[out=225,in=90] (b1);
\draw (3,1) node[empty] {$=$};
  \end{tikzpicture}
  \begin{tikzpicture}
\path (1.5,1) node[arr,name=d1u] {$d_1$}
(0.5,2) node[arr,name=t1] {$t_1$}
(1,2) node[empty,name=x] {} 
(1,0) node[arr,name=d1l] {$d_1$}
(2.2,1) node[empty,name=m] {}
(0,3) node[empty,name=in1] {}
(0.5,3) node[empty,name=in2] {}
(1,3) node[empty,name=in3] {} 
(1.5,3) node[empty,name=in4] {}
(2,3) node[empty,name=in5] {}
(2.5,3) node[empty,name=in6] {}
(0,-1) node[empty,name=b1] {}
(1,-1) node[empty,name=b2] {}
(2,-1) node[empty,name=b3] {};
\draw[braid] (in1) to[out=270,in=135] (t1);
\draw[braid,name path=bin2] (in2) to[out=270,in=100] (x) to[out=280,in=135] (d1u);
\path[braid,name path=bin3] (in3) to[out=270,in=45] (t1);
\fill[white,name intersections={of=bin2 and bin3}] (intersection-1) circle(0.1);
\draw[braid] (in3) to[out=270,in=45] (t1);
\draw[braid,name path=bin4] (in4) to[out=270,in=90] (d1u);
\draw[braid,name path=bin5] (in5) to[out=270,in=45] (d1u);
\draw[braid,name path=bin6] (in6) to[out=270,in=90] (m);
\draw[braid] (m) to[out=270,in=90] (b3);
\draw[braid] (t1) to[out=225,in=135] (d1l);
\path[braid,name path=tm] (t1) to[out=315,in=135] (m);
\fill[white,name intersections={of=tm and bin2}] (intersection-1) circle(0.1);
\fill[white,name intersections={of=tm and bin4}] (intersection-1) circle(0.1);
\fill[white,name intersections={of=tm and bin5}] (intersection-1) circle(0.1);
\draw[braid] (t1) to[out=315,in=135] (m);
\draw[braid] (d1u) to[out=225,in=90] (d1l);
\draw[braid] (d1u) to[out=270,in=45] (d1l);
\draw[braid] (d1l) to[out=225,in=90] (b1);
\draw[braid] (d1l) to[out=315,in=90] (b2);
\draw (3,1) node[empty] {$=$};
  \end{tikzpicture}
  \begin{tikzpicture}
\path (0.5,1) node[empty,name=m1] {}
(0.5,2) node[arr,name=t1] {$t_1$}
(1,2) node[empty,name=x] {} 
(1,0) node[arr,name=d1l] {$d_1$}
(2.2,1) node[empty,name=m] {}
(0,3) node[empty,name=in1] {}
(0.5,3) node[empty,name=in2] {}
(1,3) node[empty,name=in3] {} 
(1.5,3) node[empty,name=in4] {}
(2,3) node[empty,name=in5] {}
(2.5,3) node[empty,name=in6] {}
(0,-1) node[empty,name=b1] {}
(1,-1) node[empty,name=b2] {}
(2,-1) node[empty,name=b3] {};
\draw[braid] (in1) to[out=270,in=135] (t1);
\draw[braid,name path=bin2] (in2) to[out=270,in=100] (x) to[out=280,in=45] (m1);
\path[braid,name path=bin3] (in3) to[out=270,in=45] (t1);
\fill[white,name intersections={of=bin2 and bin3}] (intersection-1) circle(0.1);
\draw[braid] (in3) to[out=270,in=45] (t1);
\draw[braid,name path=bin4] (in4) to[out=270,in=90] (d1l);
\draw[braid,name path=bin5] (in5) to[out=270,in=45] (d1l);
\draw[braid,name path=bin6] (in6) to[out=270,in=90] (m);
\draw[braid] (m) to[out=270,in=90] (b3);
\draw[braid] (t1) to[out=225,in=135] (m1);
\draw[braid] (m1) to[out=270,in=135] (d1l);
\path[braid,name path=tm] (t1) to[out=315,in=135] (m);
\fill[white,name intersections={of=tm and bin2}] (intersection-1) circle(0.1);
\fill[white,name intersections={of=tm and bin4}] (intersection-1) circle(0.1);
\fill[white,name intersections={of=tm and bin5}] (intersection-1) circle(0.1);
\draw[braid] (t1) to[out=315,in=135] (m);
\draw[braid] (d1l) to[out=225,in=90] (b1);
\draw[braid] (d1l) to[out=315,in=90] (b2);
  \end{tikzpicture}
\end{equation*}
and the result is that the unique morphism rendering commutative
the first diagram in \eqref{coasociativity-1} is 
\begin{equation}\label{eq:right-vertical}
\xymatrix{
A^4 \ar[r]^-{cc^{-1}} &
A^4 \ar[r]^-{1t_11} &
A^4 \ar[r]^-{c^{-1}c} &
A^4 \ar[r]^-{d_11} &
A^3 .}
\end{equation}
Therefore $d\colon A \nto A^2$ is coassociative if and only if the second
diagram in \eqref{coasociativity-1} commutes, with the morphism
\eqref{eq:right-vertical} in the right vertical. 

By naturality, coherence, and by the associativity of $m$, it follows that 
$$
\xymatrix{
A^4\ar[r]^-{11c} \ar[d]_-{d_11} &
A^4 \ar[r]^-{1m1} &
A^3 \ar[d]^-{d_1} \\
A^3 \ar[r]_-{1c} &
A^3 \ar[r]_-{m1} &
A^2}
$$
commutes. Applying this together with \eqref{eq:t_1_A-linear} and using the
non-degeneracy of $m$, we see that commutativity of the second diagram in
\eqref{coasociativity-1}, with \eqref{eq:right-vertical} in the right
vertical, is equivalent to the `fusion equation'
\begin{equation}\label{eq:fusion}
t_11.1c.t_11.1c^{-1}.1t_1=1t_1.t_11.
\end{equation}

Summarizing, we proved so far that assertion (i) is equivalent to the validity
of \eqref{eq:e-multiplicative}, \eqref{eq:t_1-2_short_compatibility},
\eqref{eq:short_fusion}, \eqref{eq:t_1-m}, \eqref{eq:t_2-m}, and
\eqref{eq:fusion}. 

On the other hand, it was shown in \cite[Proposition
3.7]{BohmLack:braided_mba} that assertion (ii) is equivalent to the validity
of \eqref{eq:e-multiplicative}, \eqref{eq:t_1-m}, \eqref{eq:t_2-m},
\eqref{eq:fusion}, and the compatibility condition 
\begin{equation}\label{eq:t_1-2_compatibility}
t_21.1t_1=1t_1.t_21.
\end{equation}
Thus it remains to show that, in the presense of 
\eqref{eq:e-multiplicative}, \eqref{eq:t_1-m}, \eqref{eq:t_2-m}, and
\eqref{eq:fusion}, the condition \eqref{eq:t_1-2_compatibility} is equivalent
to the conjunction of \eqref{eq:t_1-2_short_compatibility} and
\eqref{eq:short_fusion}. 

For the forward implication, condition \eqref{eq:t_1-2_short_compatibility}
holds by (3.2) in 
\cite{BohmLack:braided_mba}, and \eqref{eq:short_fusion} holds by
\cite[Remark~3.6]{BohmLack:braided_mba}. For the converse,
first observe that, by \eqref{eq:t_1_A-linear} and non-degeneracy, the fusion
equation \eqref{eq:fusion} is equivalent to  
\begin{equation}\label{eq:d-fusion}
\begin{tikzpicture}
  \path (1,2) node[arr,name=t1u] {$d_1$} 
(1,1) node[arr,name=t1d]  {$t_1$} 
(0,0.3) node[arr,name=t1l] {$t_1$}; 
\path (1.6,2.7) node[empty,name=s4t] {}
(1.1,2.7) node[empty,name=s3t] {} 
(0.5,2.7) node[empty,name=s2t] {} 
(0,2.7) node[empty,name=s1t] {} 
(-0.5,-0.5) node[empty,name=s1b] {} 
(0.5,-0.5) node[empty,name=s2b] {} 
(1,-0.5) node[empty,name=s3b] {}; 
\draw[braid] (s4t) to[out=270,in=45] (t1u);
\draw[braid] (s3t) to[out=270,in=90] (t1u);
\draw[braid] (s2t) to[out=270,in=135] (t1u);
\draw[braid] (t1u) to[out=315,in=45] (t1d);
\draw[braid] (t1d) to[out=315,in=45] (s3b);
\path[braid,name path=s4] (s1t) to[out=270,in=135] (t1d);
\draw[braid,name path=s5] (t1u) to[out=225,in=0] (t1l);
\fill[white, name intersections={of=s4 and s5}] (intersection-1) circle(0.1);
\path[braid,name path=s6] (t1d) to[out=210,in=0] (0,0.8) [out=180,in=180] (t1l);
\fill[white, name intersections={of=s6 and s5}] (intersection-1) circle(0.1);
\draw[braid,name path=s4] (s1t) to[out=270,in=135] (t1d);
\draw[braid,name path=s6] (t1d) to[out=210,in=0] (0,0.8) to[out=180,in=180] (t1l);
\draw[braid] (t1l) to[out=225,in=90] (s1b);
\draw[braid] (t1l) to[out=315,in=90] (s2b);
\draw (2,1.35) node {$=$}; 
\end{tikzpicture}
\begin{tikzpicture}
  \path (0.75,1.0) node[arr,name=t1d] {$d_1$} 
(0.25,2) node[arr,name=t1u] {$t_1$} ;
\path (1.5,2.7) node[empty,name=s4t] {};
(1.0,2.7) node[empty,name=s3t] {} 
(0.5,2.7) node[empty,name=s2t] {} 
(-0.25,-0.5) node[empty,name=s1b] {} 
(0.5,-0.5) node[empty,name=s2b] {} 
(1,-0.5) node[empty,name=s3b] {} 
(-0.5,2.7) node[empty,name=s0t] {};
\draw[braid] (s4t) to[out=270,in=45] (t1d);
\draw[braid] (s3t) to[out=270,in=90]  (t1d);
\draw[braid] (s2t) to[out=270,in=45] (t1u); 
\draw[braid] (s1t) to[out=315,in=135] (t1u);
\draw[braid] (t1u) to[out=225,in=90] (s1b);
\draw[braid] (t1u) to[out=315,in=135] (t1d);
\draw[braid] (t1d) to[out=250,in=100]  (s2b);
\draw[braid] (t1d) to[out=290,in=80] (s3b);
\end{tikzpicture}
\end{equation}
and now
\begin{equation*}
 \begin{tikzpicture}
  \path (1,2) node[arr,name=t1u] {$d_1$} 
(1,1) node[arr,name=t1d]  {$t_1$} 
(-0.5,0.3) node[arr,name=t1l] {$t_2$} 
(-0,-0.2) node[empty,name=m] {};
\path (1.6,2.7) node[empty,name=s4t] {}
(1.1,2.7) node[empty,name=s3t] {} 
(0.5,2.7) node[empty,name=s2t] {} 
(0,2.7) node[empty,name=s1t] {} 
(-0.5,-0.5) node[empty,name=s1b] {} 
(0.,-0.5) node[empty,name=s2b] {} 
(1,-0.5) node[empty,name=s3b] {} 
(-0.5,2.7) node[empty,name=s0t] {};
\draw[braid] (s4t) to[out=270,in=45] (t1u);
\draw[braid] (s3t) to[out=270,in=90] (t1u);
\draw[braid] (s2t) to[out=270,in=135] (t1u);
\draw[braid] (t1u) to[out=315,in=45] (t1d);
\draw[braid] (t1d) to[out=315,in=45] (s3b);
\path[braid,name path=s4] (s1t) to[out=270,in=135] (t1d);
\draw[braid,name path=s5] (t1u) to[out=225,in=0] (m);
\fill[white, name intersections={of=s4 and s5}] (intersection-1) circle(0.1);
\path[braid,name path=s6] (t1d) to[out=210,in=45]  (t1l);
\fill[white, name intersections={of=s6 and s5}] (intersection-1) circle(0.1);
\draw[braid,name path=s4] (s1t) to[out=270,in=135] (t1d);
\draw[braid] (t1d) to[out=210,in=45]  (t1l);
\draw[braid] (m) to (s2b);
\draw[braid] (s0t) to[out=270,in=135] (t1l);
\draw[braid] (t1l) to[out=315,in=180] (m);
\draw[braid] (t1l) to[out=225,in=90] (s1b);
\draw (1.8,1.35) node[empty] {$\stackrel{~\eqref{eq:t_1-2_short_compatibility}}=$};
\end{tikzpicture} \hskip-0.5cm
 \begin{tikzpicture}
  \path (1,2) node[arr,name=t1u] {$d_1$} 
(1,1) node[arr,name=t1d]  {$t_1$} 
(0,0.3) node[arr,name=t1l] {$t_1$} 
(-0.5,-0.2) node[empty,name=m] {};
\path (1.6,2.7) node[empty,name=s4t] {}
(1.1,2.7) node[empty,name=s3t] {} 
(0.5,2.7) node[empty,name=s2t] {} 
(0,2.7) node[empty,name=s1t] {} 
(-0.5,-0.5) node[empty,name=s1b] {} 
(0.5,-0.5) node[empty,name=s2b] {} 
(1,-0.5) node[empty,name=s3b] {} 
(-0.5,2.7) node[empty,name=s0t] {};
\draw[braid] (s4t) to[out=270,in=45] (t1u);
\draw[braid] (s3t) to[out=270,in=90] (t1u);
\draw[braid] (s2t) to[out=270,in=135] (t1u);
\draw[braid] (t1u) to[out=315,in=45] (t1d);
\draw[braid] (t1d) to[out=315,in=45] (s3b);
\path[braid,name path=s4] (s1t) to[out=270,in=135] (t1d);
\draw[braid,name path=s5] (t1u) to[out=225,in=0] (t1l);
\fill[white, name intersections={of=s4 and s5}] (intersection-1) circle(0.1);
\path[braid,name path=s6] (t1d) to[out=210,in=0] (0,0.8) [out=180,in=180] (t1l);
\fill[white, name intersections={of=s6 and s5}] (intersection-1) circle(0.1);
\draw[braid,name path=s4] (s1t) to[out=270,in=135] (t1d);
\draw[braid,name path=s6] (t1d) to[out=210,in=0] (0,0.8) to[out=180,in=180] (t1l);
\draw[braid] (m) to (s1b);
\draw[braid] (s0t) to[out=270,in=180] (m) to[out=0,in=225] (t1l);
\draw[braid] (t1l) to[out=315,in=90] (s2b);
\draw (1.8,1.35) node[empty] {$\stackrel{\eqref{eq:d-fusion}}=$};
\end{tikzpicture} \hskip-0.25cm
\begin{tikzpicture}
  \path (0.75,1.0) node[arr,name=t1d] {$d_1$} 
(0.25,2) node[arr,name=t1u] {$t_1$} 
(-0.25,0.5) node[arr,name=m]  {} ;
\path (1.5,2.7) node[empty,name=s4t] {}
(1.0,2.7) node[empty,name=s3t] {} 
(0.5,2.7) node[empty,name=s2t] {} 
(0,2.7) node[empty,name=s1t] {} 
(-0.25,-0.5) node[empty,name=s1b] {} 
(0.5,-0.5) node[empty,name=s2b] {} 
(1,-0.5) node[empty,name=s3b] {} 
(-0.5,2.7) node[empty,name=s0t] {};
\draw[braid] (s4t) to[out=270,in=45] (t1d);
\draw[braid] (s3t) to[out=270,in=90]  (t1d);
\draw[braid] (s2t) to[out=270,in=45] (t1u); 
\draw[braid] (s1t) to[out=270,in=135] (t1u);
\draw[braid] (s0t) to[out=250,in=180] (m) to[out=0,in=225] (t1u);
\draw[braid] (m) to[out=270,in=90] (s1b);
\draw[braid] (t1u) to[out=315,in=135] (t1d);
\draw[braid] (t1d) to[out=250,in=100]  (s2b);
\draw[braid] (t1d) to[out=290,in=80] (s3b);
\draw (1.7,1.35) node[empty] {$\stackrel{~\eqref{eq:t_1-2_short_compatibility}}=$};
\end{tikzpicture} \hskip-0.00cm
\begin{tikzpicture}
  \path (0.75,1.0) node[arr,name=d1] {$d_1$} 
(-0.25,2) node[arr,name=t2] {$t_2$} 
(0.1,1.5) node[arr,name=m]  {} ;
\path (1.2,2.7) node[empty,name=s4t] {}
(0.8,2.7) node[empty,name=s3t] {} 
(0.4,2.7) node[empty,name=s2t] {} 
(0,2.7) node[empty,name=s1t] {} 
(-0.25,-0.5) node[empty,name=s1b] {} 
(0.5,-0.5) node[empty,name=s2b] {} 
(1,-0.5) node[empty,name=s3b] {} 
(-0.5,2.7) node[empty,name=s0t] {};
\draw[braid] (s4t) to[out=270,in=45] (d1);
\draw[braid] (s3t) to[out=270,in=90]  (d1);
\draw[braid] (s2t) to[out=270,in=0] (m); 
\draw[braid] (s1t) to[out=270,in=45] (t2);
\draw[braid] (s0t) to[out=270,in=135] (t2);
\draw[braid] (t2) to[out=250,in=100] (s1b);
\draw[braid] (m) to[out=270,in=135] (d1);
\draw[braid] (t2) to[out=290,in=180] (m);
\draw[braid] (d1) to[out=250,in=100]  (s2b);
\draw[braid] (d1) to[out=290,in=80] (s3b);
\draw (1.8,1.35) node {$\stackrel{~\eqref{eq:d-short-fusion}}=$};
\end{tikzpicture} \hskip -0.2cm
\begin{tikzpicture}
  \path (0.75,2.0) node[arr,name=d1u] {$d_1$} 
(-0.25,2) node[arr,name=t2] {$t_2$} 
(0.75,1.0) node[arr,name=d1d]  {$d_1$} ;
\path (1.2,2.7) node[empty,name=s4t] {}
(0.8,2.7) node[empty,name=s3t] {} 
(0.4,2.7) node[empty,name=s2t] {} 
(0,2.7) node[empty,name=s1t] {} 
(-0.25,-0.5) node[empty,name=s1b] {} 
(0.5,-0.5) node[empty,name=s2b] {} 
(1,-0.5) node[empty,name=s3b] {} 
(-0.5,2.7) node[empty,name=s0t] {};
\draw[braid] (s4t) to[out=270,in=45] (d1u);
\draw[braid] (s3t) to[out=270,in=90]  (d1u);
\draw[braid] (s2t) to[out=270,in=135] (d1u); 
\draw[braid] (s1t) to[out=270,in=45] (t2);
\draw[braid] (s0t) to[out=270,in=135] (t2);
\draw[braid] (t2) to[out=250,in=100] (s1b);
\draw[braid] (t2) to[out=315,in=135] (d1d);
\draw[braid] (d1u) to[out=315,in=45] (d1d);
\draw[braid] (d1u) to[out=225,in=90] (d1d);
\draw[braid] (d1d) to[out=250,in=100]  (s2b);
\draw[braid] (d1d) to[out=290,in=80] (s3b);
\end{tikzpicture}
\end{equation*}
and cancelling $d_1$ from the left and right hand composite and using
non-degeneracy now gives the desired result.  
\end{proof}

The result \cite[Proposition~3.1]{JanssenVercruysse:mba&mha} can be seen as
the special case where \cc is the category of modules over a commutative ring,
and where the object $A$ is projective over that ring: it then states that if
$A$ is a multiplier Hopf algebra in the sense of Van Daele
\cite{VanDaele:multiplier_Hopf} then it can be seen as a comooid in the
corresponding category \cm. Theorem~\ref{thm:mbm_vs_comonoid} shows that the
restriction to projective modules can be avoided, as well as generalizing to
other braided monoidal categories. 

Let us stress that in Theorem \ref{thm:mbm_vs_comonoid} we only
described {\em certain} multiplier bimonoids as comonoids in $\cm$ (those
whose multiplication is non-degenerate, and for which the multiplication as
well as morphisms $d_1,d_2$ and $e$ belong to \cq). Also, {\em not every}
comonoid in $\cm$ corresponds to a multiplier bimonoid (only those whose
morphisms $d_1,d_2$ have a particular form). Results stronger in both aspects
can be achieved by taking a different point of view. Recall that comonoids in
a monoidal category $\mathcal M$ can be regarded as simplicial maps from the
Catalan simplicial set $\mathbb C$ to the nerve of $\mathcal M^{\mathsf{co}}$
(meaning the category with the reverse composition)
\cite{BuckGarnLackStreet}. In \cite{BohmLack:simplicial} we construct a
simplicial set which is not necessarily the nerve of any monoidal category,
but for which the simplicial maps from $\mathbb C$ to it can be identified
with multiplier bimonoids.  


\section{Morphisms}

We have seen how to identify (certain) multiplier bimonoids in the braided
monoidal category \cc with comonoids in the monoidal category \cm. We shall
now investigate morphisms of comonoids. 

\subsection{Morphisms between comonoids in \cm}
Suppose that $(C,d,e)$ and $(C',d',e')$ are comonoids in \cm. We claim
that a  morphism of comonoids is then a morphism $f:C \nto C'$ in $\cm$ 
whose components render commutative the following diagrams. 
\begin{equation}\label{eq:comonoidal-morphism}
\xymatrix{
CC'\ar[r]^-{f_1} \ar[rd]_-{ee'} &
C'\ar[d]^-{e'}
&
C'C^3C^{\prime 2}  \ar[r]^-{1d_111} \ar[d]_-{111c1} &
C'C^2C^{\prime 2} \ar[r]^-{11c1} &
C'(CC')^2 \ar[r]^-{1f_1f_1} &
C^{\prime 3} \ar[d]^-{d'_1}\\
& I
&
C'C(CC')^2\ar[rr]_-{f_2f_1f_1} &&
C^{\prime 3} \ar[r]_-{d'_1} & 
C^{\prime 2}}
\end{equation}
There is also of course an equivalent, symmetric set of diagrams with the
roles of the components interchanged:
$$
\xymatrix{
C'C\ar[r]^-{f_2} \ar[rd]_-{e'e} &
C'\ar[d]^-{e'}
&
C^{\prime 2}C^3C' \ar[r]^-{11d_21} \ar[d]_-{1c111} &
C^{\prime 2}C^2C' \ar[r]^-{1c11} &
(C'C)^2C' \ar[r]^-{f_2f_21} &
C^{\prime 3} \ar[d]^-{d'_2}\\
& I
&
(C'C)^2CC'\ar[rr]_-{f_2f_2f_1} &&
C^{\prime 3} \ar[r]_-{d'_2} & 
C^{\prime 2} .}
$$
Moreover, using the non-degeneracy of $C^{\prime 2}$, the second diagram of
\eqref{eq:comonoidal-morphism} is seen to be equivalent also to either of the
symmetric diagrams 
$$
\xymatrix@C=15pt{
C^{\prime 3}C^3\ar[rr]^-{d'_2d_1} \ar[d]_-{11f_211} &&
C^{\prime 2}C^2\ar[r]^-{1c1} &
(C'C)^2 \ar[d]^-{f_2f_2} 
&
C^3C^{\prime 3}\ar[rr]^-{d_2d'_1} \ar[d]_-{11f_111} &&
C^2C^{\prime 2}\ar[r]^-{1c1} &
(CC')^2 \ar[d]^-{f_1f_1} \\
C^{\prime 3}C^2\ar[r]_-{d'_211} &
C^{\prime 2}C^2 \ar[r]_-{1c1} &
(C'C)^2\ar[r]_-{f_2f_2} &
C^{\prime 2}
&
C^2C^{\prime 3}\ar[r]_-{11d'_1} &
C^2C^{\prime 2}\ar[r]_-{1c1} &
(CC')^2\ar[r]_-{f_1f_1} &
C^{\prime 2} .}
$$

We now explain why \eqref{eq:comonoidal-morphism} is equivalent to
preservation of the comonoid structure. Counitality of $f$ is clearly
equivalent to commutativity of the first diagram of
\eqref{eq:comonoidal-morphism}; and $f$ is comultiplicative if and only if the
uniquely determined right verticals of the 
diagrams \begin{equation} \label{eq:f_comultiplicative}
\xymatrix{
CC^{\prime 3} \ar[r]^-{1d'_1} \ar[d]_-{f_111} &
CC^{\prime 2} \ar@{-->}[d]
&&
C^3 C^{\prime 2} \ar[r]^-{11c1} \ar[d]_-{d_111} &
C(CC')^2 \ar[r]^-{1f_1f_1} &
CC^{\prime 2} \ar@{-->}[d]\\
C^{\prime 3} \ar[r]_-{d'_1} &
C^{\prime 2}
&&
C^2C^{\prime 2} \ar[r]_-{1c1} &
(CC')^2 \ar[r]_-{f_1f_1} &
C^{\prime 2}}
\end{equation}
are equal to each other. Let us denote this common morphism by $g\colon CC^{\prime
2} \to C^{\prime 2}$. Using the non-degeneracy of the multiplication 
of $C^{\prime 2}$ and the fact that $d'_21111\colon C^{\prime 3} C C^{\prime 3} \to
C^{\prime 2} C C^{\prime 3}$ is an epimorphism, commutativity of the first
diagram is equivalent to commutativity of  
$$
\xymatrix{
C^{\prime 3} C C^{\prime 3} \ar[r]^-{d'_21111} \ar[d]_-{d'_21111} &
C^{\prime 2} C C^{\prime 3} \ar[r]^-{111 d'_1} &
C^{\prime 2} C C^{\prime 2} \ar[r]^-{11g} &
C^{\prime 4} \ar[r]^-{1c1} &
C^{\prime 4} \ar[d]^-{m'm'} \\
C^{\prime 2}CC^{\prime 3}  \ar[r]_-{11f_111} &
C^{\prime 5}  \ar[r]_-{11d'_1} &
C^{\prime 4} \ar[r]_-{1c1} &
C^{\prime 4} \ar[r]_-{m'm'} &
C^{\prime 2} }
$$
and by \eqref{eq:component-compatibility} and the non-degeneracy of $C^{\prime
2}$ again, this is further equivalent to commutaivity of  
$$
\xymatrix{
C'CC^{\prime 3} \ar[r]^-{11d'_1} \ar[d]_-{1f_111} &
C'CC^{\prime 2} \ar[r]^-{1g} &
C^{\prime 3} \ar[d]^-{d'_1} \\
C^{\prime 4}\ar[r]_-{1d'_1} &
C^{\prime 3}\ar[r]_-{d'_1} &
C^{\prime 2}.}
$$
By commutativity of the diagram 
$$
\xymatrix{
C'CC^{\prime 3} \ar[rr]^-{1f_111} \ar[dr]^-{f_2111} \ar[dd]_{11d'_1} &
\ar@{}[rd]|-{\eqref{eq:component-compatibility}} &
C^{\prime 4} \ar[r]^-{1d'_1} \ar[d]^-{m'11} &
C^{\prime 3} \ar[dd]^-{d'_1} \\
& C^{\prime 4} \ar[r]^-{m'11} \ar[d]^-{1d'_1} &
C^{\prime 3} \ar[rd]^(.6){d'_1} 
\ar@{}[r]|-{\eqref{eq:multiplicative}}
\ar@{}[d]|-{\eqref{eq:multiplicative}} & \\
C'CC^{\prime 2} \ar[r]_{f_211} & 
C^{\prime 3} \ar[rr]_-{d'_1} &&
C^{\prime 2} .}
$$
and the fact that $11d'_1$ is an epimorphism, commutativity of the first
diagram in 
\eqref{eq:f_comultiplicative} is further equivalent to commutativity of 
\begin{equation}\label{eq:f_comultiplicative-1}
\xymatrix{
C'CC^{\prime 2} \ar[r]^-{1g} \ar[d]_-{f_211}&
C^{\prime 3} \ar[d]^-{d'_1} \\
C^{\prime 3}\ar[r]_-{d'_1} &
C^{\prime 2} .}
\end{equation}
We conclude that $f$ is comultiplicative if and only if the second diagram in
\eqref{eq:f_comultiplicative} commutes, having in the right vertical the
unique morphism $g$ rendering commutative \eqref{eq:f_comultiplicative-1}.

By similar steps to those used to analyze the first diagram, commutativity of
the second diagram in \eqref{eq:f_comultiplicative} is seen to be equivalent
to commutativity of    
$$
\xymatrix{
C'C^3C^{\prime 2} \ar[r]^-{111c1}\ar[d]_-{1d_111} &
C'C(CC')^2 \ar[r]^-{11f_1f_1} &
C'CC^{\prime 2} \ar[r]^-{1g} &
C^{\prime 3}\ar[d]^-{d'_1}\\
C'C^2C^{\prime 2}\ar[r]_-{11c1} &
C'(CC')^2 \ar[r]_-{1f_1f_1} &
C^{\prime 3}\ar[r]_-{d'_1} &
C^{\prime 2} }
$$
or, writing the top right path in an equal form via
\eqref{eq:f_comultiplicative-1}, 
to commutativity of the second diagram in \eqref{eq:comonoidal-morphism}.

\subsection{Morphisms between multiplier bimonoids.}
We may define morphisms between the multiplier bimonoids in part (ii) of 
Theorem \ref{thm:mbm_vs_comonoid} as comonoid morphisms between the
corresponding comonoids in part (i) of  Theorem \ref{thm:mbm_vs_comonoid}.  
This leads to the following explicit description:

Let  $(A,t_1,t_2,e)$ and $(A',t'_1,t'_2,e')$ be
bimonoids in $\cc$ obeying the conditions in Theorem
\ref{thm:mbm_vs_comonoid}~(ii). 
We claim that a morphism of multiplier bimonoids from $(A,t_1,t_2,e)$ to
$(A',t'_1,t'_2,e')$ is a morphism $f:A\nto A'$ in the category $\cm$ of
Proposition~\ref{prop:category-M} whose components render commutative the
following diagrams. 
\begin{equation}\label{eq:mbm_morphism}
\xymatrix{
A'A\ar[r]^-{f_2} \ar[rd]_-{e'e} &
A'\ar[d]^-{e'}
&&
A^{\prime 2}A^2 \ar[r]^-{1f_21} \ar[d]_-{t'_2t_1} &
A^{\prime 2}A \ar[r]^-{t'_21} &
A^{\prime 2}A \ar[d]^-{1f_2}\\
& I
&&
A^{\prime 2}A^2 \ar[r]_-{1c1} &
(A'A)^2 \ar[r]_-{f_2f_2} &
A^{\prime 2}}
\end{equation}
Once again, there is an equivalent, symmetric set of diagrams with the roles of
the components interchanged:
$$
\xymatrix{
AA'\ar[r]^-{f_1} \ar[rd]_-{ee'} &
A'\ar[d]^-{e'}
&&
A^2A^{\prime 2} \ar[r]^-{1f_11} \ar[d]_-{t_2t'_1} &
AA^{\prime 2} \ar[r]^-{1t'_1} &
AA^{\prime 2} \ar[d]^-{f_11}\\
& I
&&
A^2A^{\prime 2} \ar[r]_-{1c1} &
(AA')^2 \ar[r]_-{f_1f_1} &
A^{\prime 2}}
$$
Moreover, using the non-degeneracy of $A^{\prime 2}$, the second diagram of
\eqref{eq:mbm_morphism} is seen to be equivalent also to either of the
symmetric diagrams 
$$
\xymatrix@R=15pt{
A'A^2A' \ar[r]^-{1t_11} \ar[dddd]_-{f_2f_1} &
A'A^2A'\ar[r]^-{11f_1} &
A'AA' \ar[d]^-{c1}
&&
A'A^2A' \ar[r]^-{1t_21} \ar[dddd]_-{f_2f_1} &
A'A^2A'\ar[r]^-{f_211} &
A'AA' \ar[d]^-{1c} \\
&& AA^{\prime 2} \ar[d]^-{1t'_1}
&&
&& A^{\prime 2}A \ar[d]^-{t'_21} \\
&& AA^{\prime 2} \ar[d]^-{c^{-1}1}
&&
&& A^{\prime 2}A \ar[d]^-{1c^{-1}} \\
&& A'AA' \ar[d]^-{f_21}
&&
&& A'AA' \ar[d]^-{1f_1} \\
A^{\prime 2}\ar[rr]_-{t'_1} &&
A^{\prime 2}
&&
A^{\prime 2}\ar[rr]_-{t'_2} &&
A^{\prime 2} .}
$$

We only need to show that for the particular components $d_1$ and $d_2$ in
Theorem \ref{thm:mbm_vs_comonoid}~(i), commutativity of the second diagram in
\eqref{eq:comonoidal-morphism} becomes equivalent to commutativity of the
second diagram in \eqref{eq:mbm_morphism}. In terms of strings, this says that
the first and last composites below are equal; but since the first three are
always equal by \eqref{eq:multiplicative} and
\eqref{eq:component-compatibility} for $f$, this is equivalent to the last two
composites being equal. 
\begin{equation*}
  \begin{tikzpicture}
\path (1,3.5) node[empty,name=m] {}
(1,4.5) node[arr,name=t1u] {$t_1$}
(1.5,1.5) node[arr,name=t1d] {$t'_1$}
(1.5,2.5) node[arr,name=f1l] {$f_1$}
(2.5,2.5) node[arr,name=f1r] {$f_1$}
(1.5,4.5) node[empty,name=x] {} 
(1.5,0.5) node[empty,name=m1] {}
(0,5) node[empty,name=in1] {}
(0.5,5) node[empty,name=in2] {}
(1,5) node[empty,name=in3] {} 
(1.5,5) node[empty,name=in4] {}
(2,5) node[empty,name=in5] {}
(2.5,5) node[empty,name=in6] {}
(1.5,0) node[empty,name=b1] {}
(2.5,0) node[empty,name=b2] {};
\draw[braid] (in1) to[out=270,in=135] (t1d);
\draw[braid] (in2) to[out=270,in=135] (t1u);
\draw[braid, name path=bin3] (in3) to[out=315,in=90] (x) to[out=270,in=45] (m);
\path[braid, name path=bin4] (in4) to[out=270,in=45] (t1u);
\fill[white,name intersections={of=bin3 and bin4}] (intersection-1) circle(0.1);
\draw[braid] (in4) to[out=270,in=45] (t1u);
\draw[braid, name path=bin5] (in5) to[out=270,in=45] (f1l);
\draw[braid, name path=bin6] (in6) to[out=270,in=45] (f1r);
\draw[braid] (t1u) to[out=225,in=135] (m);
\path[braid, name path=t1uf1r] (t1u) to[out=315,in=135] (f1r);
\fill[white,name intersections={of=bin3 and t1uf1r}] (intersection-1) circle(0.1);
\fill[white,name intersections={of=bin5 and t1uf1r}] (intersection-1) circle(0.1);
\draw[braid] (t1u) to[out=315,in=135] (f1r);
\draw[braid] (m) to[out=270,in=135] (f1l);
\draw[braid, name path=f1lm1] (f1l) to[out=315,in=45] (m1);
\path[braid, name path=f1rt1d] (f1r) to[out=270,in=45] (t1d);
\fill[white,name intersections={of=f1lm1 and f1rt1d}] (intersection-1) circle(0.1);
\draw[braid] (f1r) to[out=270,in=45] (t1d);
\draw[braid] (t1d) to[out=225,in=135] (m1);
\path[braid, name path=t1db2] (t1d) to[out=315,in=90] (b2);
\fill[white,name intersections={of=t1db2 and f1lm1}] (intersection-1) circle(0.1);
\draw[braid] (t1d) to[out=315,in=90] (b2);
\draw[braid] (m1) to (b1);
\path (3.5,2.5) node[empty] {$=$};
  \end{tikzpicture}
  \begin{tikzpicture}
\path (1,3.5) node[empty,name=m] {}
(1,4.5) node[arr,name=t1u] {$t_1$}
(1.5,1.5) node[arr,name=t1d] {$t'_1$}
(1.5,2.5) node[arr,name=f1l] {$f_1$}
(2.5,2.5) node[arr,name=f1r] {$f_1$}
(2,4.5) node[arr,name=f1u] {$f_1$}
(1.5,4.5) node[empty,name=x] {} 
(1.5,0.5) node[empty,name=m1] {}
(0,5) node[empty,name=in1] {}
(0.5,5) node[empty,name=in2] {}
(1,5) node[empty,name=in3] {} 
(1.5,5) node[empty,name=in4] {}
(2,5) node[empty,name=in5] {}
(2.5,5) node[empty,name=in6] {}
(1.5,0) node[empty,name=b1] {}
(2.5,0) node[empty,name=b2] {};
\draw[braid] (in1) to[out=270,in=135] (t1d);
\draw[braid] (in2) to[out=270,in=135] (t1u);
\draw[braid, name path=bin3] (in3) to[out=315,in=135] (f1u);
\path[braid, name path=bin4] (in4) to[out=270,in=45] (t1u);
\fill[white,name intersections={of=bin3 and bin4}] (intersection-1) circle(0.1);
\draw[braid] (in4) to[out=270,in=45] (t1u);
\draw[braid, name path=bin5] (in5) to[out=270,in=45] (f1u);
\draw[braid, name path=bin6] (in6) to[out=270,in=45] (f1r);
\draw[braid] (t1u) to[out=225,in=135] (f1l);
 \path[braid, name path=t1uf1r] (t1u) to[out=315,in=135] (f1r);
 \draw[braid, name path=f1uf1l] (f1u) to[out=270,in=45] (f1l);
 \fill[white,name intersections={of=t1uf1r and f1uf1l}] (intersection-1) circle(0.1);
 \draw[braid] (t1u) to[out=315,in=135] (f1r);
\draw[braid, name path=f1lm1] (f1l) to[out=315,in=45] (m1);
\path[braid, name path=f1rt1d] (f1r) to[out=270,in=45] (t1d);
\fill[white,name intersections={of=f1lm1 and f1rt1d}] (intersection-1) circle(0.1);
\draw[braid] (f1r) to[out=270,in=45] (t1d);
\draw[braid] (t1d) to[out=225,in=135] (m1);
\path[braid, name path=t1db2] (t1d) to[out=315,in=90] (b2);
\fill[white,name intersections={of=t1db2 and f1lm1}] (intersection-1) circle(0.1);
\draw[braid] (t1d) to[out=315,in=90] (b2);
\draw[braid] (m1) to (b1);
\path (3.5,2.5) node[empty] {$=$};
  \end{tikzpicture}
  \begin{tikzpicture}
\path (1,3.5) node[empty,name=m] {}
(1,4.5) node[arr,name=t1u] {$t_1$}
(0.5,2.0) node[arr,name=t1d] {$t'_1$}
(1.0,1.0) node[arr,name=f1l] {$f_2$}
(2.5,3.0) node[arr,name=f1r] {$f_1$}
(2,4.5) node[arr,name=f1u] {$f_1$}
(1.5,4.5) node[empty,name=x] {} 
(1.5,0.5) node[empty,name=m1] {}
(0,5) node[empty,name=in1] {}
(0.5,5) node[empty,name=in2] {}
(1,5) node[empty,name=in3] {} 
(1.5,5) node[empty,name=in4] {}
(2,5) node[empty,name=in5] {}
(2.5,5) node[empty,name=in6] {}
(1.5,0) node[empty,name=b1] {}
(2.5,0) node[empty,name=b2] {};
\draw[braid] (in1) to[out=270,in=135] (t1d);
\draw[braid] (in2) to[out=270,in=135] (t1u);
\draw[braid, name path=bin3] (in3) to[out=315,in=135] (f1u);
\path[braid, name path=bin4] (in4) to[out=270,in=45] (t1u);
\fill[white,name intersections={of=bin3 and bin4}] (intersection-1) circle(0.1);
\draw[braid, name path=bin5] (in5) to[out=270,in=45] (f1u);
\draw[braid, name path=bin6] (in6) to[out=270,in=45] (f1r);
\draw[braid, name path=t1uf1l] (t1u) to[out=225,in=45] (f1l);
\draw[braid] (in4) to[out=270,in=45] (t1u);
\path[braid, name path=t1uf1r] (t1u) to[out=315,in=135] (f1r);
\draw[braid, name path=f1um1] (f1u) to[out=270,in=45] (m1);
\fill[white,name intersections={of=t1uf1r and f1um1}] (intersection-1) circle(0.1);
\draw[braid] (t1u) to[out=315,in=135] (f1r);
\draw[braid, name path=f1lm1] (f1l) to[out=315,in=135] (m1);
 \path[braid, name path=f1rt1d] (f1r) to[out=270,in=45] (t1d);
 \fill[white,name intersections={of=f1um1 and f1rt1d}] (intersection-1) circle(0.1);
\fill[white,name intersections={of=t1uf1l and f1rt1d}] (intersection-1) circle(0.1);
 \draw[braid] (f1r) to[out=270,in=45] (t1d);
\draw[braid] (t1d) to[out=225,in=135] (f1l);
\path[braid, name path=t1db2] (t1d) to[out=315,in=90] (b2);
\fill[white,name intersections={of=t1db2 and f1um1}] (intersection-1) circle(0.1);
\fill[white,name intersections={of=t1db2 and t1uf1l}] (intersection-1) circle(0.1);
\draw[braid] (t1d) to[out=315,in=90] (b2);
\draw[braid] (m1) to (b1);
  \end{tikzpicture}
\qquad
\begin{tikzpicture}
  \path (1,1.0) node[empty,name=m] {}
(1.0,2.5) node[arr,name=t1d] {$t'_1$}
(1.25,4.0) node[arr,name=f1l] {$f_1$}
(2.25,4.0) node[arr,name=f1r] {$f_1$}
(0.25,4) node[arr,name=f2] {$f_2$}
(1.5,4.5) node[empty,name=x] {} 
(0,5) node[empty,name=in1] {}
(0.5,5) node[empty,name=in2] {}
(1,5) node[empty,name=in3] {} 
(1.5,5) node[empty,name=in4] {}
(2,5) node[empty,name=in5] {}
(2.5,5) node[empty,name=in6] {}
(1.0,0) node[empty,name=b1] {}
(2.5,0) node[empty,name=b2] {};
\draw[braid] (in1) to[out=270,in=135] (f2);
\draw[braid] (in2) to[out=270,in=45] (f2);
\draw[braid, name path=bin3] (in3) to[out=270,in=135] (f1l);
\path[braid, name path=bin4] (in4) to[out=270,in=135] (f1r);
\draw[braid, name path=bin5] (in5) to[out=270,in=45] (f1l);
\fill[white,name intersections={of=bin4 and bin5}] (intersection-1) circle(0.1);
\draw[braid] (in4) to[out=270,in=135] (f1r);
\draw[braid] (in6) to[out=270,in=45] (f1r);
\draw[braid] (f2) to[out=270,in=135] (t1d);
\draw[braid, name path=f1lm] (f1l) to[out=315,in=45] (m);
\path[braid, name path=f1rt1d] (f1r) to[out=270,in=45] (t1d);
\fill[white,name intersections={of=f1lm and f1rt1d}] (intersection-1) circle(0.1);
\draw[braid] (f1r) to[out=270,in=45] (t1d);
\draw[braid] (t1d) to[out=225,in=135] (m);
\path[braid, name path=t1db2] (t1d) to[out=315,in=90] (b2);
\fill[white,name intersections={of=f1lm and t1db2}] (intersection-1) circle(0.1);
\draw[braid] (t1d) to[out=315,in=90] (b2);
\draw[braid] (m) to (b1);
\end{tikzpicture}
\end{equation*}
Since $f_1$ belongs to \cq and the multiplication is non-degenerate, this is
in turn equivalent to the equality of the following composites.   
\begin{equation*}
  \begin{tikzpicture}
\path 
(1,3) node[arr,name=t1u] {$t_1$}
(0.5,2.0) node[arr,name=t1d] {$t'_1$}
(1.0,1.0) node[arr,name=f1l] {$f_2$}
(1.5,2.5) node[arr,name=f1r] {$f_1$}
(0.5,0.5) node[empty,name=m] {}
(-0.5,4) node[empty,name=in0] {}
(0,4) node[empty,name=in1] {}
(0.5,4) node[empty,name=in2] {}
(1.5,4) node[empty,name=in4] {}
(2.0,4) node[empty,name=in6] {}
(0.5,0) node[empty,name=b1] {}
(2.0,0) node[empty,name=b2] {};
\draw[braid] (in0) to[out=270,in=135] (m);
\draw[braid] (in1) to[out=270,in=135] (t1d);
\draw[braid] (in2) to[out=270,in=135] (t1u);
\draw[braid, name path=bin6] (in6) to[out=270,in=45] (f1r);
\draw[braid, name path=t1uf1l] (t1u) to[out=225,in=45] (f1l);
\draw[braid] (in4) to[out=270,in=45] (t1u);
\path[braid, name path=t1uf1r] (t1u) to[out=315,in=135] (f1r);
\draw[braid] (t1u) to[out=315,in=135] (f1r);
\draw[braid, name path=f1lm1] (f1l) to[out=270,in=45] (m);
 \path[braid, name path=f1rt1d] (f1r) to[out=270,in=45] (t1d);
\fill[white,name intersections={of=t1uf1l and f1rt1d}] (intersection-1) circle(0.1);
 \draw[braid] (f1r) to[out=270,in=45] (t1d);
\draw[braid] (t1d) to[out=225,in=135] (f1l);
\path[braid, name path=t1db2] (t1d) to[out=315,in=45] (b2);
\fill[white,name intersections={of=t1db2 and t1uf1l}] (intersection-1) circle(0.1);
\draw[braid] (t1d) to[out=315,in=45] (b2);
\draw[braid] (m) to (b1);
  \end{tikzpicture}
\qquad
\begin{tikzpicture}
  \path 
(1.0,1.5) node[arr,name=t1d] {$t'_1$}
(1.75,3.0) node[arr,name=f1r] {$f_1$}
(0.25,3) node[arr,name=f2] {$f_2$}
(0.5,0.5) node[empty,name=m] {}
(-0.5,4) node[empty,name=in0] {}
(0,4) node[empty,name=in1] {}
(0.5,4) node[empty,name=in2] {}
(1.5,4) node[empty,name=in4] {}
(2.0,4) node[empty,name=in6] {}
(0.5,0) node[empty,name=b1] {}
(2.0,0) node[empty,name=b2] {};
\draw[braid] (in0) to[out=270,in=135] (m);
\draw[braid] (in1) to[out=270,in=135] (f2);
\draw[braid] (in2) to[out=270,in=45] (f2);
\path[braid, name path=bin4] (in4) to[out=270,in=135] (f1r);
\draw[braid] (in4) to[out=270,in=135] (f1r);
\draw[braid] (in6) to[out=270,in=45] (f1r);
\draw[braid] (f2) to[out=270,in=135] (t1d);
\path[braid, name path=f1rt1d] (f1r) to[out=270,in=45] (t1d);
\draw[braid] (f1r) to[out=270,in=45] (t1d);
\draw[braid] (t1d) to[out=225,in=45] (m);
\path[braid, name path=t1db2] (t1d) to[out=315,in=90] (b2);
\draw[braid] (t1d) to[out=315,in=90] (b2);
\draw[braid] (m) to (b1);
\end{tikzpicture}  
\end{equation*}
In the left diagram, use \eqref{eq:multiplier} for $f$,
\eqref{eq:t_1-2_short_compatibility}, and \eqref{eq:component-compatibility} 
for $f$; in the right, use \eqref{eq:t_1-2_short_compatibility} and
\eqref{eq:component-compatibility} for $f$. The equality of the resulting
composites is equivalent, by non-degeneracy, to commutativity of the second
diagram in \eqref{eq:mbm_morphism}. 

\begin{example}
Let $A$ and $A'$ be multiplier bimonoids satisfying the conditions in 
Theorem \ref{thm:mbm_vs_comonoid} (ii) and let $g\colon A\to A'$ be a
morphism in the category $\mathcal D$ of Proposition \ref{prop:f-sharp}. 
Then $g^\#$ is a morphism of multiplier bimonads if and only if  $e'.g=e$ and
$t'_1.gg=gg.t_1$.

Indeed, the top right path of the first diagram of \eqref{eq:mbm_morphism}
takes the form in any of the equal paths in
$$
\xymatrix{
A'A \ar[rr]^-{1g} \ar[d]_-{e'1} &&
A^{\prime 2} \ar[r]^-{m'}\ar[d]_-{e'e'} \ar@{}[rd]|-{\eqref{eq:e-multiplicative}}&
A' \ar[d]^-{e'} \\
A\ar[r]_-{g} &
A' \ar[r]_-{e'} & 
I \ar@{=}[r] &
I.}
$$
Since $e'1:A'A \to A$ is an epimorphism, this is equal to $e'e$ (in the left
bottom path of the first diagram of \eqref{eq:mbm_morphism}) if and only if
$e'.g=e$. 

The top right path of the second diagram of \eqref{eq:mbm_morphism} takes the
form of any of the equal paths in 
$$
\xymatrix{
A^{\prime 2}A^2 \ar[r]^-{11g1} \ar[d]_-{t'_211} &
A^{\prime 3}A \ar[rrr]^-{1m'1} \ar[d]_-{111g} &&&
A^{\prime 2}A \ar[r]^-{t'_21} &
A^{\prime 2}A \ar[d]^-{11g} \\
A^{\prime 2}A^2\ar[d]_-{11gg} &
A^{\prime 4}\ar[rrr]^-{1m'1} \ar[d]_-{11t'_1} 
\ar@{}[rrrdd]|-{\eqref{eq:short_fusion}}&&&
A^{\prime 3} \ar[r]^-{t'_21} \ar[dd]^-{1t'_1} 
\ar@{}[rdd]|-{\eqref{eq:t_1-2_short_compatibility}}&
A^{\prime 3} \ar[dddd]^-{1m'} \\
A^{\prime 4}\ar[dd]_-{11t'_1} &
A^{\prime 4}\ar[d]_-{11c^{-1}} \\
&
A^{\prime 4} \ar[r]^-{1t'_11} \ar[d]_-{t'_211} 
\ar@{}[rd]|-{\eqref{eq:t_1-2_short_compatibility}}&
A^{\prime 4} \ar[r]^-{11c} \ar[d]^-{m'11} &
A^{\prime 4} \ar[r]^-{1m'1} \ar[dd]_-{m'11} 
\ar@{}[rdd]|-{\quad \textrm{(associativity)}}&
A^{\prime 3}\ar[rdd]^-{m'1} &\\
A^{\prime 4} \ar[r]^-{11c^{-1}}\ar[d]_-{1c1} &
A^{\prime 4} \ar[r]^-{1m'1} &
A^{\prime 3}\ar[rd]^-{1c}\\
A^{\prime 4}\ar[rrr]_-{11m'} &&&
A^{\prime 3} \ar[rr]_-{m'1} &&
A^{\prime 2}.}
$$
Using the form of $d'_2$ together with the non-degeneracy and the
associativity of $m'$, this is equal to the composite $m'm'.1c1.11gg.t'_2t_1$
(occurring in the left bottom path of the first diagram of
\eqref{eq:mbm_morphism}) if and only if 
$$
m'm'.1c1.11t'_1.11gg.d'_211=m'm'.1c1.11gg.11t_1.d'_211. 
$$
Since $d'_211$ is an epimorphism and $C^{\prime 2}$ is non-degenerate, this is
equivalent to $t'_1.gg=gg.t_1$. 
\end{example}


\bibliographystyle{plain}

\end{document}